\documentclass[11pt]{article}

\usepackage{amsmath,amsfonts,amsthm,amssymb,mathtools}
\usepackage{mathrsfs}
\usepackage{graphicx}
\usepackage{xcolor}
\usepackage[numbers]{natbib}
\usepackage{enumitem}
\usepackage[colorlinks=true,
linkcolor=blue,citecolor=blue,urlcolor=blue]{hyperref}

\allowdisplaybreaks

\newtheorem{theorem}{Theorem}
\newtheorem{lemma}{Lemma}
\newtheorem{proposition}{Proposition}
\newtheorem{corollary}{Corollary}
\newtheorem{remark}{Remark}
\newtheorem{example}{Example}
\newtheorem{claim}{Claim}

\newtheorem{fact}{Fact}
\title{\bfseries\Large
	Equality Cases for the Face-Degree Majorization Theorem
	on Simplicial Complexes}

\author{
	Yueli Han,\ \ Lu Lu\footnote{
		Corresponding author.
		E-mail addresses:
		hanyueli2022@163.com (Y. Han),
		lulumath@csu.edu.cn (L. Lu).}\\[2mm]
	\footnotesize School of Mathematics and Statistics, HNP-LAMA,
	Central South University\\
	\footnotesize Changsha, Hunan 410083, China
}

\date{}

\begin{document}
	
	\maketitle

\begin{abstract}
	The Grone--Merris--Bai theorem states that the Laplacian spectrum of a
	simple graph is majorized by its conjugate degree sequence. Recently,
	Zhang, Song, and Fan extended this result to simplicial complexes by
	establishing a majorization relation between the spectrum of the
	$(r-1)$-dimensional up-Laplacian and the conjugate $(r-1)$-degree sequence.
	In this paper, we characterize all equality cases in the partial-sum
	inequalities of this higher-dimensional majorization theorem. For every
	$r$-dimensional simplicial complex $X$ with $r\ge2$, we prove that
	\[
	\sum_{i=1}^{q}\lambda_{r-1,i}(X)
	=
	\sum_{i=1}^{q}d_{r-1,i}^{\top}(X)
	\]
	if and only if
	\[
	q\ge \max\{\operatorname{rank}B_r(X),\Delta_{r-1}(X)\}.
	\]
	Thus, unlike the graph case, equality can occur only after both sequences
	have exhausted all their nonzero terms. As consequences, equality in the
	first partial sum and equality between the entire sequences are both equivalent
	to $X$ containing a unique $r$-simplex. The proof is based on the local
	down-Laplacian decomposition and the equality case of the Ky Fan inequality.
\end{abstract}
	\medskip
	
	\noindent
	{\bf Keywords:}
Simplicial complex; up-Laplacian; Laplacian spectrum; face-degree sequence; majorization; equality case; Ky Fan inequality.
	
	\medskip
	
	\noindent
	{\bf MSC(2020).}
	05C50, 05E45, 15A18.
	
	\section{Introduction}\label{sc-1}
	
	The relation between the Laplacian spectrum of a graph and its degree sequence is a central topic in spectral graph theory. Let $G$ be a simple graph, let
	\[\lambda_0(G)=\bigl(\lambda_{0,1}(G),	\lambda_{0,2}(G),\ldots\bigr)	\]
	be its Laplacian spectrum arranged in weakly decreasing order, and let $d_0^{\top}(G)$ denote the conjugate degree sequence of
	$G$. In 1994, Grone and Merris~\cite{GM1994} conjectured that $	\lambda_0(G)\preceq d_0^{\top}(G)$, and this conjecture was proved by Bai~\cite{B2011}. The resulting statement is now known as the Grone--Merris--Bai theorem.
	
	The equality problem has also attracted considerable attention. In 1994, Merris~\cite{M1994} proved that $\lambda_0(G)=d_0^{\top}(G)$
	if and only if $G$ is a threshold graph. More recently, Cai, Chen, Yang, and Zhang~\cite{CCYZ2026} completely characterized
	all pairs $(G,q)$ for which
	\[\sum_{i=1}^{q}\lambda_{0,i}(G)=\sum_{i=1}^{q}d_{0,i}^{\top}(G).
	\]
	Their result shows that, in the graph case, equality may occur
	at a genuine intermediate partial sum and gives a complete
	description of the corresponding graph structures.
	
	Higher-dimensional analogues of graph Laplacians arise naturally on simplicial complexes. Let $X$ be an $r$-dimensional simplicial complex with $r\ge 1$, and let
	\[\lambda_{r-1}(X)=\bigl(\lambda_{r-1,1}(X),\lambda_{r-1,2}(X),\ldots\bigr)\]
	denote the eigenvalue sequence of $L_{r-1}^{\operatorname{up}}(X)$, arranged in weakly decreasing order. Let
	\[d_{r-1}^{\top}(X)=\bigl(\deg_{r-1,1}^{\top}(X),\deg_{r-1,2}^{\top}(X),\ldots\bigr)\]
	denote the conjugate $(r-1)$-degree sequence of $X$. The precise definitions of the majorization relation, the simplicial up-Laplacian,
	and the corresponding conjugate degree sequences are given in Section~\ref{sc-2}. In 2002, Duval and Reiner~\cite{DR2002} proposed a higher-dimensional analogue of the Grone--Merris conjecture involving the conjugate vertex-degree sequence. Very recently, Zhang, Song, and Fan~\cite{ZSF2026} showed that this conjecture fails in general. Independently, Huang~\cite{H2026} constructed further counterexamples, proved the second partial-sum inequality in the vertex-degree formulation, and classified its equality cases. Gupta~\cite{G2026} subsequently characterized, for pure complexes, equality between the entire up-Laplacian spectrum and the conjugate vertex-degree partition, and also studied bounds for the first two up-Laplacian eigenvalues. These results concern degrees of vertices. In contrast, the present paper concerns degrees of $(r-1)$-faces and the face-degree majorization theorem proved by Zhang, Song, and Fan:
	\[
	\lambda_{r-1}(X)\preceq d_{r-1}^{\top}(X).
	\]
	When $r=1$, this reduces exactly to the Grone--Merris--Bai
	theorem.
	
	The purpose of the present paper is to determine all equality cases in the partial-sum inequalities of this higher-dimensional majorization theorem. Define
\[\Delta_{r-1}(X)=\max_{\tau\in X(r-1)}\deg_{r-1}(X;\tau).\]
Our main result is stated as follows.
\begin{theorem}\label{thm-main}
	Let $X$ be an $r$-dimensional simplicial complex with $r\ge 2$, and let $q$ be a positive integer. Then
	\begin{equation}\label{eq-main-equality}
		\sum_{i=1}^{q}\lambda_{r-1,i}(X)=\sum_{i=1}^{q}d_{r-1,i}^{\top}(X)
	\end{equation}
	if and only if
	\begin{equation}\label{eq-main-condition}
		q\ge \max\left\{\operatorname{rank}B_r(X),\Delta_{r-1}(X)\right\}.
	\end{equation}
\end{theorem}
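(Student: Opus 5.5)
We work with the standard setup: $L_{r-1}^{\operatorname{up}}(X)=B_r B_r^{\top}$, indexed by $X(r-1)$, shares its nonzero spectrum with the down-Laplacian $L_r^{\operatorname{down}}=B_r^{\top}B_r$, indexed by $X(r)$. The number of positive eigenvalues is $\operatorname{rank}B_r(X)$. The sum of all eigenvalues is $\operatorname{tr}(B_rB_r^\top)=\sum_\tau \deg_{r-1}(X;\tau)=(r+1)f_r$. The conjugate sequence $d^{\top}_{r-1}$ has exactly $\Delta_{r-1}(X)$ nonzero terms, and its total is also $(r+1)f_r$, where $f_r=|X(r)|$.

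\textbf{Sufficiency.} Suppose $q\ge\max\{\operatorname{rank}B_r,\Delta_{r-1}\}$. Then both partial sums equal their full totals, which are both $(r+1)f_r$. This direction is routine.

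\textbf{Necessity.} We argue by contrapositive: assume $q<\max\{\operatorname{rank}B_r,\Delta_{r-1}\}$ and show the inequality of Zhang--Song--Fan is strict. The plan is to re-run their proof while tracking equality. The local down-Laplacian decomposition writes
\[
L_r^{\operatorname{down}}=\sum_{\tau\in X(r-1)} b_\tau b_\tau^{\top},
\]
where $b_\tau$ is the signed indicator of the $r$-simplices containing $\tau$, so $\|b_\tau\|^2=\deg(\tau)$. Equivalently, we group the rows of $B_r$, one per $(r-1)$-face, each a signed $0/\pm1$ vector. By Ky Fan,
\[
\sum_{i\le q}\lambda_i=\max_{P}\operatorname{tr}(P\,L_r^{\operatorname{down}}),
\]
where $P$ ranges over rank-$q$ orthogonal projections. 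The bound $\operatorname{tr}(P\,b_\tau b_\tau^\top)=\|Pb_\tau\|^2\le\min\{\deg\tau,\ \cdot\}$ is combined with a column-counting argument to yield $\sum_{i\le q}d^\top_i=\sum_\tau\min(\deg\tau,q)$. I expect the cleanest version to be
\[
\sum_{i\le q}\lambda_i\le\sum_\tau \min(q,\deg\tau),
\]
obtained by splitting each $b_\tau b_\tau^\top$ into a diagonal part and an off-diagonal part, and bounding via $\operatorname{tr}(PD)\le$ (sum of the $q$ largest diagonal entries). I will adopt whichever decomposition the paper's Section~2 provides.

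Equality then forces equality at every step. Equality in the Ky Fan step means the top-$q$ eigenspace of $L_r^{\operatorname{down}}$ simultaneously achieves the maximum for each summand. Equality in the local bound $\|Pb_\tau\|^2=\min(q,\deg\tau)$ forces $b_\tau\in\operatorname{range}P$ whenever $\deg\tau\le q$. For faces with $\deg\tau>q$, it forces $P$ to be the coordinate projection onto $q$ of the simplices containing $\tau$, with a compatible sign structure.

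\textbf{Case analysis.} Case (a): $q<\Delta_{r-1}$. A face $\tau$ of maximum degree forces $P$ to be a coordinate projection onto simplices $\sigma_1,\dots,\sigma_q\ni\tau$, so $e_{\sigma_j}$ must lie in an invariant subspace of $L_r^{\operatorname{down}}$. But $\sigma_1$ has $r+1\ge3$ facets, and at least $\tau$ has other cofaces $\sigma$ outside $\operatorname{range}P$. Hence $L^{\operatorname{down}}e_{\sigma_1}$ has a nonzero entry at such a $\sigma$, and invariance fails; here $r\ge2$ is used to ensure enough adjacency. Case (b): $\Delta_{r-1}\le q<\operatorname{rank}B_r$. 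All $b_\tau$ would lie in the $q$-dimensional range of $P$. Since the $b_\tau$ span the row space of $B_r$, whose dimension is $\operatorname{rank}B_r>q$, this is a contradiction.

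\textbf{Main obstacle.} The hardest step is making the equality analysis in case (a) rigorous, namely showing that equality really forces the coordinate-projection structure, given how Zhang--Song--Fan's inequality chain is arranged. I would first try to establish the per-face inequality $\|Pb_\tau\|^2\le\min(q,\deg\tau)$ directly, as $\|Pb_\tau\|^2\le\|b_\tau\|^2$ together with a second bound via the diagonal. I would then reduce the equality conditions to the condition that each $b_\tau$ with $\deg\tau>q$ lies in a coordinate-aligned position relative to the eigenspace.
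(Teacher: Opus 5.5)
Your plan has a genuine gap at its central step: the per-face bound $\|Pb_\tau\|^2\le\min\{q,\deg\tau\}$ is false, even when $P$ is the top-$q$ eigenprojection. Take $r=2$, two triangles $\sigma_1,\sigma_2$ sharing an edge $\tau_0$, and $q=1$. Then $L_2^{\operatorname{down}}=2I+b_{\tau_0}b_{\tau_0}^{\top}$, its top eigenvector is $b_{\tau_0}/\sqrt2$, and $\|Pb_{\tau_0}\|^2=2>1=\min\{q,\deg\tau_0\}$. The global inequality $\lambda_{1,1}=4\le5=d^{\top}_{1,1}$ survives only because the four other edges contribute $1/2$ each.

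The failure is structural. With rank-one summands, Ky Fan gives only $\Phi_q(b_\tau b_\tau^{\top})=\deg\tau$, i.e.\ the trivial bound $(r+1)f_r(X)$. In your diagonal/off-diagonal splitting, the diagonal parts contribute exactly $(r+1)q$. What remains is $\operatorname{tr}\bigl(P(L_r^{\operatorname{down}}-(r+1)I)\bigr)\le\sum_\tau\min\{q,\deg\tau\}-(r+1)q$, which is the whole inequality again.

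Consequently your Case (a) has no foundation. It rests on the equality case of a bound that does not hold, namely that $P$ is forced to be a coordinate projection onto $q$ cofaces of a maximum-degree face. Your fallback of adopting the paper's decomposition does not rescue it, because the equality analysis there is entirely different. Your Case (b) is correct but is the easy half: if $\Delta_{r-1}(X)\le q$, the right-hand side already equals $\operatorname{tr}L_{r-1}^{\operatorname{up}}(X)$, so equality forces $\lambda_{r-1,q+1}(X)=0$ directly.

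The missing ingredient is the coarser decomposition $rL_r^{\operatorname{down}}(X)=\sum_{\eta\in X(r-2)}A_\eta$, with $A_\eta$ orthogonally similar to $L_1^{\operatorname{down}}(\operatorname{lk}_X\eta)\oplus O$. There the local bound $\Phi_q(A_\eta)\le\sum_{\tau\supset\eta}\min\{\deg_{r-1}(X;\tau),q\}$ is the Grone--Merris--Bai theorem on the link graph, not a diagonal estimate. Equality in Ky Fan then yields a common maximizing $q$-subspace $U$. If $\operatorname{rank}A_\eta\le q$ you get $\operatorname{im}A_\eta\subseteq U$, but if $\operatorname{rank}A_\eta>q$ you get only the support condition $U\subseteq\operatorname{im}A_\eta\subseteq\mathbb R^{\mathcal F_\eta}$. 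No coordinate-projection structure is available.

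The heart of the argument is showing that no $A_\eta$ has rank $>q$, for $2\le q<f_r(X)$; the case $q=1$ is a separate Rayleigh-quotient support argument. Suppose $\mathcal S=\{\eta:\operatorname{rank}A_\eta>q\}$ is nonempty. Then $C=\bigcup\mathcal S$ lies in at least two common $r$-simplices, so $|C|\in\{r-1,r\}$ and $|\mathcal S|\le r<\binom{r+1}{2}$. The faces outside $\mathcal S$ then force $C$ into every $r$-simplex. In each subcase, suitable $\zeta\notin\mathcal S$ whose links are forests give $\mathbb R^{X(r)}\subseteq U$, contradicting $q<f_r(X)$.

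Once all $A_\eta$ have rank at most $q$, $\operatorname{im}L_r^{\operatorname{down}}(X)\subseteq U$ gives $\operatorname{rank}B_r(X)\le q$. Comparing $\operatorname{tr}A_\eta=\sum_{\tau\supset\eta}\deg_{r-1}(X;\tau)$ with the local equality then gives $\Delta_{r-1}(X)\le q$.
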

Theorem~\ref{thm-main} reveals a sharp distinction between the graph case and the higher-dimensional case. Indeed, $\operatorname{rank}B_r(X)$ is precisely the number of nonzero eigenvalues of $L_{r-1}^{\operatorname{up}}(X)$, whereas $\Delta_{r-1}(X)$ is precisely the number of nonzero entries of $d_{r-1}^{\top}(X)$. Hence equality in \eqref{eq-main-equality} occurs exactly when both sides have
already exhausted all their nonzero terms.
	
As an immediate consequence of Theorem~\ref{thm-main}, we first obtain a characterization of equality in the first partial-sum inequality.
\begin{corollary}\label{cor-first}
	Let $X$ be an $r$-dimensional simplicial complex with $r\ge 2$.
	Then
	\[\lambda_{r-1,1}(X)=d_{r-1,1}^{\top}(X)\] if and only if $f_r(X)=1$.
\end{corollary}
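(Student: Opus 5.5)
We derive Corollary~\ref{cor-first} from Theorem~\ref{thm-main} with $q=1$. By Theorem~\ref{thm-main}, $\lambda_{r-1,1}(X)=d_{r-1,1}^{\top}(X)$ holds if and only if $1\ge \max\{\operatorname{rank}B_r(X),\Delta_{r-1}(X)\}$. Since $X$ is $r$-dimensional, it has at least one $r$-simplex $\sigma$. Then $B_r(X)\neq 0$, because the column indexed by $\sigma$ has $\pm1$ entries in the rows of its $r+1$ facets. Moreover, each facet of $\sigma$ has degree at least $1$. Hence both quantities are at least $1$, and the condition becomes
\[
\operatorname{rank}B_r(X)=1 \quad\text{and}\quad \Delta_{r-1}(X)=1 .
\]
It remains to show that this pair of conditions is equivalent to $f_r(X)=1$.

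Suppose $f_r(X)=1$. Then $B_r(X)$ has a single nonzero column, so its rank is $1$. Every $(r-1)$-face lies in at most one $r$-simplex, so $\Delta_{r-1}(X)=1$.

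Conversely, suppose $f_r(X)\ge 2$, and let $\sigma\ne\sigma'$ be two $r$-simplices. It suffices to show $\operatorname{rank}B_r(X)\ge 2$. The columns of $B_r(X)$ indexed by $\sigma$ and $\sigma'$ have supports equal to the sets of facets of $\sigma$ and $\sigma'$, respectively. Each support has size $r+1$. Two distinct $r$-simplices share at most one $(r-1)$-face, since two shared facets would span a vertex set of size $r+1$ common to both. Since $r\ge 2$ gives $r+1\ge 3>1$, the two supports differ. Two nonzero vectors with different supports cannot be scalar multiples of each other, so the columns are linearly independent and $\operatorname{rank}B_r(X)\ge2$.

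This argument does not even use the condition $\Delta_{r-1}(X)=1$. (That condition alone would not suffice, for instance when $X$ has two disjoint $r$-simplices.) The only subtle point is the support argument; everything else follows directly from Theorem~\ref{thm-main}.
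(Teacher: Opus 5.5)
Your proof is correct and follows the paper's route: apply Theorem~\ref{thm-main} with $q=1$, then show that $\operatorname{rank}B_r(X)\le 1$ and $\Delta_{r-1}(X)\le 1$ together are equivalent to $f_r(X)=1$. The only difference is in the converse direction. The paper splits into two cases: two $r$-simplices sharing an $(r-1)$-face, which gives $\Delta_{r-1}(X)\ge 2$, or columns with disjoint supports, which gives rank at least $2$. You instead note that any two distinct $r$-simplices give nonzero columns of $B_r(X)$ with different supports, hence linearly independent columns, so $f_r(X)\ge 2$ already forces $\operatorname{rank}B_r(X)\ge 2$ and the degree condition is not needed there.
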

The equality case for the first partial sum already determines equality between the entire sequences.
\begin{corollary}\label{cor-full}
	Let $X$ be an $r$-dimensional simplicial complex with $r\ge 2$.
	Then
	\[\lambda_{r-1}(X)=d_{r-1}^{\top}(X)\]
	if and only if $f_r(X)=1$. In particular, if $X$ is pure, then $\lambda_{r-1}(X)=d_{r-1}^{\top}(X)$	if and only if $X\cong\Delta_r$, where $\Delta_r$ denotes the
	$r$-dimensional simplex.
\end{corollary}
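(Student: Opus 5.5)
The plan is to obtain the first equivalence from Corollary~\ref{cor-first} together with a direct computation for a complex with a single $r$-simplex, and then to deduce the pure case from the definition of purity. Throughout, two weakly decreasing sequences are compared after padding the shorter one with zeros, as is implicit in the majorization setting of Section~\ref{sc-2}.

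\emph{Necessity.} Suppose $\lambda_{r-1}(X)=d_{r-1}^{\top}(X)$. Then in particular their first entries agree, i.e. $\lambda_{r-1,1}(X)=d_{r-1,1}^{\top}(X)$. By Corollary~\ref{cor-first} this forces $f_r(X)=1$.

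\emph{Sufficiency.} Assume $f_r(X)=1$, and let $\sigma$ be the unique $r$-simplex. The boundary matrix $B_r(X)$ is then a single column with exactly $r+1$ nonzero entries, each equal to $\pm1$. These entries are indexed by the $r+1$ facets of $\sigma$.
\begin{itemize}[leftmargin=*]
\item \emph{Spectrum.} We have $L_{r-1}^{\operatorname{up}}(X)=B_r(X)B_r(X)^{\top}$, which has rank one. Its unique nonzero eigenvalue is $B_r(X)^{\top}B_r(X)=r+1$, so
\[
\lambda_{r-1}(X)=(r+1,0,0,\ldots).
\]
\item \emph{Degrees.} Each facet of $\sigma$ has $(r-1)$-degree $1$, and every other $(r-1)$-face has degree $0$. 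Hence $\Delta_{r-1}(X)=1$. The conjugate sequence satisfies $d_{r-1,1}^{\top}(X)=\#\{\tau:\deg_{r-1}(X;\tau)\ge1\}=r+1$ and $d_{r-1,j}^{\top}(X)=0$ for $j\ge2$.
\end{itemize}
The two sequences therefore coincide. Alternatively, one can apply Theorem~\ref{thm-main} with $q=1=\max\{\operatorname{rank}B_r(X),\Delta_{r-1}(X)\}$, and combine this with the trivial equalities of all later entries, which are zero on both sides.

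\emph{Pure case.} If $X$ is pure of dimension $r$, every face lies in some $r$-simplex. So $f_r(X)=1$ holds precisely when $X$ is the closure of a single $r$-simplex, that is, $X\cong\Delta_r$. Conversely, $f_r(\Delta_r)=1$. I expect no real obstacle here. The only point needing care is the padding convention for comparing the two sequences, which makes the equality in the sufficiency step meaningful.
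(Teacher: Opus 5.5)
Your proposal is correct and follows essentially the same route as the paper. Necessity comes from comparing first entries and invoking Corollary~\ref{cor-first}. Sufficiency uses the same rank-one computation: you compute $B_r(X)^{\top}B_r(X)=r+1$ directly, where the paper cites $L_r^{\operatorname{down}}(X)=(r+1)$ together with \eqref{eq-01}. The degree count and the purity argument for $X\cong\Delta_r$ match the paper's.
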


Finally, the rank condition in Theorem~\ref{thm-main} can be expressed in homological terms, yielding the following reformulation.
\begin{corollary}\label{cor-homology}
	Let $X$ be an $r$-dimensional simplicial complex with $r\ge 2$, and let $\beta_r(X)=\dim \widetilde{H}_r(X;\mathbb{R})$.
	Then
	\[\sum_{i=1}^{q}\lambda_{r-1,i}(X)=\sum_{i=1}^{q}d_{r-1,i}^{\top}(X)
	\]
	if and only if
	\[
	q\ge 
	\max\left\{
	f_r(X)-\beta_r(X),
	\Delta_{r-1}(X)
	\right\}.
	\]
\end{corollary}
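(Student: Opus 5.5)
The statement to prove is Corollary~\ref{cor-homology}. It follows from Theorem~\ref{thm-main} once we show that $\operatorname{rank}B_r(X)=f_r(X)-\beta_r(X)$, so the plan reduces to this identity.

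First, recall that $B_r(X)$ is the boundary matrix of $\partial_r\colon C_r(X;\mathbb{R})\to C_{r-1}(X;\mathbb{R})$. Its columns are indexed by the $f_r(X)$ oriented $r$-simplices. By rank--nullity,
\[
\operatorname{rank}B_r(X)=f_r(X)-\dim\ker\partial_r .
\]

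Second, identify $\ker\partial_r$ with the top homology. Since $X$ is $r$-dimensional, $C_{r+1}(X;\mathbb{R})=0$, so the image of $\partial_{r+1}$ is $0$. Hence $\widetilde{H}_r(X;\mathbb{R})=\ker\partial_r$. Reduced and unreduced homology agree in degree $r\ge2$, since they differ only in degree $0$. Therefore $\dim\ker\partial_r=\beta_r(X)$, and so $\operatorname{rank}B_r(X)=f_r(X)-\beta_r(X)$.

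Finally, substitute this into condition \eqref{eq-main-condition} of Theorem~\ref{thm-main}; this gives exactly the condition stated in the corollary. There is no genuine obstacle. The only point to check is the convention for $B_r(X)$: if the paper defines it with rows and columns transposed, the rank is unchanged. Likewise, using real rather than integer coefficients matches the definition of $\beta_r$.
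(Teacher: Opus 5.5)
Your proposal is correct and follows the paper's own proof essentially verbatim: since $C_{r+1}(X)=0$, you get $\widetilde{H}_r(X;\mathbb{R})=\ker\partial_r(X)$, so rank--nullity yields $\operatorname{rank}B_r(X)=f_r(X)-\beta_r(X)$, and substituting this into Theorem~\ref{thm-main} gives the corollary. Your side remarks on reduced versus unreduced homology and on the transpose convention are harmless but not needed, since the paper already defines $\widetilde{H}_r$ as $\ker\partial_r/\operatorname{im}\partial_{r+1}$.
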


The following example illustrates that the equality threshold in
Theorem~\ref{thm-main} can be strictly smaller than the number of
$r$-simplices, even though equality is never a genuine intermediate
partial-sum equality.
\begin{example}\label{ex-simplex-boundary}
	Let $X=\partial\Delta_{r+1}$ be the boundary complex of the
	$(r+1)$-dimensional simplex. Then $X$ is an $r$-dimensional simplicial
	complex with
	\[
	 f_r(X)=r+2,\qquad \beta_r(X)=1,
	 \qquad\text{and}\qquad \Delta_{r-1}(X)=2.
	\]
	Indeed, $X$ triangulates the $r$-sphere, and every $(r-1)$-simplex is
	contained in exactly two $r$-simplices. Hence
	\[
	 \operatorname{rank}B_r(X)=f_r(X)-\beta_r(X)=r+1.
	\]
	Theorem~\ref{thm-main} therefore gives
	\[
	 \sum_{i=1}^{q}\lambda_{r-1,i}(X)
	 =\sum_{i=1}^{q}d_{r-1,i}^{\top}(X)
	 \quad\Longleftrightarrow\quad q\ge r+1.
	\]
	In particular, equality first occurs at $q=r+1<f_r(X)=r+2$;
	nevertheless, all nonzero eigenvalues have already been included.
\end{example}

The proof of Theorem~\ref{thm-main} is based on the local down-Laplacian decomposition introduced by Zhang, Song, and Fan~\cite{ZSF2026}. Merely applying equality results to the individual link graphs is not sufficient, because equality must be compatible across all links. The main new ingredient is therefore a simultaneous analysis of the equality case in the Ky Fan inequality. Global equality produces a common maximizing subspace for all local down-Laplacians. Its coordinate support must then be compatible with the families of $r$-simplices containing the various $(r-2)$-simplices. We show that this compatibility is rigid when $r\ge 2$: it forces every local matrix to have rank at most $q$, and consequently forces $\operatorname{rank}B_r(X)\le q$ and $\Delta_{r-1}(X)\le q$. Thus equality in the $q$-th partial-sum inequality can occur only when both the up-Laplacian spectrum and the conjugate $(r-1)$-degree sequence have exhausted all their nonzero terms by the $q$-th position.

The remainder of the paper is organized as follows. Section~\ref{sc-2} introduces the necessary notation and preliminary results concerning
simplicial Laplacians, face degrees, majorization, and local down-Laplacians. In Section~\ref{sc-3}, we establish the equality conditions for the Ky Fan inequality and the local majorization inequalities that will be used in the proof. Section~\ref{sc-4} is devoted to the proof of Theorem~\ref{thm-main} and its corollaries.
	
	\section{Preliminaries}\label{sc-2}
Let $V$ be a finite set. An \textit{abstract simplicial complex} (or simply \textit{complex}) $X$ on $V$ is a collection of the subsets of $V$ closed under taking subsets; that is, if $F\in X$ and $F'\subseteq F$, then $F'\in X$.  An \textit{$k$-simplex} (or \textit{$k$-face}) in $X$ is an element of $X$ with cardinality $k+1$, and its \textit{dimension} is defined as its cardinality minus one. The \textit{dimension} of $X$, denoted by $\dim X$, is the maximum dimension of all simplices of $X$. Two abstract simplicial complexes $X_1$ and $X_2$ are said to be \textit{isomorphic}, denoted by $X_1\cong X_2$, if there is a bijective correspondence $f$ mapping the vertex set of $X_1$ to the vertex set of $X_2$ such that $\left\{v_0, v_1, \dots, v_k\right\} \in X_1$ if and only if $\left\{f\left(v_0\right), f\left(v_1\right), \dots, f\left(v_k\right)\right\} \in X_2$ for any $k$. 

Let $X(k)$ be the set of all $k$-simplices of $X$ for $k \ge 0$, and write $f_k(X) \colon= |X(k)|$ for its cardinality. For each $k$-simplex $\sigma$, define $N^{+}_X(\sigma)\colon=\left\{\tau\in X(k+1)\colon\sigma\subseteq\tau\right\}$, 
called the \textit{upper neighborhood} of $\sigma$; its cardinality is referred to as the \textit{upper degree} of $\sigma$, denoted by $\deg^{+}_X(\sigma)$. The \textit{link} of $\sigma$ is defined by $\operatorname{lk}_X\sigma\colon= \{\tau\in X\colon \tau\cap\sigma=\emptyset,\tau\cup\sigma\in X\}$.	
	
A simplex $\sigma$ is said to be \textit{oriented} if its vertices are assigned a linear ordering, denoted by $[\sigma]$.  Throughout the paper, all simplices are assumed to be oriented. We assume that $\emptyset\in X$, where $\emptyset$ is called the \emph{empty simplex} and has dimension $-1$. Then $X(-1)=\{\emptyset\}$ and $f_{-1}(X)=1$. For $-1\le k\le \dim X$, the \textit{$k$-chain group} $C_k(X,\mathbb{R})$ is the real vector space formally generated by all oriented $k$-simplices as a basis. For each integer $0\le  k\le \dim X$, the \emph{boundary map} $\partial_k(X)\colon C_k(X, \mathbb{R}) \rightarrow C_{k-1}(X,\mathbb{R})$ is the linear operator, defined on a $k$-simplex
$\sigma=[v_0,\ldots,v_k]$ by
\[\partial_k(X)\sigma=\sum_{j=0}^{k}(-1)^j[v_0,\ldots,\widehat{v_j},\ldots,v_k],\]
where $\widehat{v_j}$ indicates that the vertex $v_j$ is omitted. Equivalently,
\[\partial_k(X)\sigma=\sum_{\eta\in X(k-1)}(\sigma:\eta)\eta,\]
where the \textit{incidence sign} $(\sigma:\eta)$ is defined by
\[(\sigma:\eta)=\begin{cases}1,& \text{if $\eta\subset\sigma$ and the orientation of $\eta$ agrees with that induced by $\sigma$,}\\-1,& \text{if $\eta\subset\sigma$ and the orientation of $\eta$ is opposite to that induced by $\sigma$,}\\
	0,& \text{if $\eta\not\subset\sigma$.}\end{cases}
\]
The boundary map is extended linearly to all chains in $C_k(X,\mathbb{R})$. With respect to the standard inner product, let $\partial_k^*(X)\colon C_{k-1}(X,\mathbb{R})\to C_k(X,\mathbb{R})$ denote the adjoint of $\partial_k(X)$. The $k$-dimensional
\emph{up-Laplacian} and \emph{down-Laplacian} are defined by \[
L_k^{\operatorname{up}}(X)=\partial_{k+1}(X)\partial_{k+1}(X)^*\quad\text{and}\quad L_k^{\operatorname{down}}(X)=\partial_k(X)^*\partial_k(X),
\]
respectively, which have been extensively studied in \cite{DR2002,FWW2024,ZHL2026,L2020,SY2020,KRS2000}.
It is well known that $\partial_k(X)\circ\partial_{k+1}(X)=0$
for every $0\le  k\le \dim X-1$. The \textit{$k$-th reduced homology group} of $X$ is given by \[\widetilde{H}_k(X;\mathbb{R})\colon=\ker\partial_k(X) / \mathrm{im} \partial_{k+1}(X).\]

For each integer $0\le  k\le  \dim X$, let $B_k(X)=\bigl(B_k(X)_{\eta,\sigma}\bigr)$ denote the matrix representation of $\partial_k(X)$ with respect to the standard bases of $C_k(X,\mathbb{R})$ and $C_{k-1}(X,\mathbb{R})$. Its rows and columns are indexed by $X(k-1)$ and $X(k)$, respectively, and
	its entries are given by
	\[B_k(X)_{\eta,\sigma}=
	\begin{cases}
		(\sigma:\eta), & \text{if }\eta\subset\sigma,\\
		0, & \text{otherwise}.
	\end{cases}
	\]
	Thus, with respect to the standard basis of $C_k(X,\mathbb{R})$, the matrix representations of the $k$-th up-Laplacian and down-Laplacian are
	\begin{equation}\label{eq-2}
		L_k^{\operatorname{up}}(X)=B_{k+1}(X)B_{k+1}(X)^{\top},\qquad 0\le  k\le  r-1
	\end{equation}
	and
	\begin{equation}\label{eq-6}
		L_k^{\operatorname{down}}(X)=B_k(X)^{\top}B_k(X),\qquad 0\le  k\le  r,
	\end{equation}
	respectively. Let $\mathbf{s}_k^{\operatorname{up}}(X)$ and $\mathbf{s}_k^{\operatorname{down}}(X)$ denote the spectra of $L_k^{\operatorname{up}}(X)$ and $L_k^{\operatorname{down}}(X)$, respectively. Both $L_k^{\operatorname{up}}(X)$ and
	$L_k^{\operatorname{down}}(X)$ are real symmetric positive semidefinite matrices, and hence all their eigenvalues are real and
	nonnegative. Moreover, it is a standard fact in linear algebra that, for any real matrix $M$, the matrices $M^{\top}M$ and $MM^{\top}$
	have the same nonzero eigenvalues, including multiplicities. Therefore, by \eqref{eq-2} and \eqref{eq-6}, 
\begin{equation}\label{eq-01}
	\mathbf{s}_k^{\operatorname{down}}\left(X\right) \stackrel{\circ}{=} \mathbf{s}_{k-1}^{\operatorname{up}}\left(X\right),
\end{equation}
where $\stackrel{\circ}{=}$ denotes equality after zero eigenvalues are disregarded. The entries of these matrices admit the following explicit descriptions.
\begin{theorem}[{\cite{HJ2013}}]\label{thm-15}
	Let $X$ be a simplicial complex. For each $-1\le  k\le \dim X-1$, the rows and columns of $L_k^{\operatorname{up}}(X)$ are indexed by the $k$-simplices of $X$, and, for all $\sigma,\tau\in X(k)$,
	\[L_k^{\operatorname{up}}(X)_{\sigma,\tau}=
	\begin{cases}
		\deg_X^{+}(\sigma),
		& \text{if }\sigma=\tau,\\[2mm]
		(\sigma\cup\tau:\sigma)(\sigma\cup\tau:\tau),
		& \text{if }|\sigma\cap\tau|=k
		\text{ and }\sigma\cup\tau\in X,\\[2mm]
		0,& \text{otherwise}.
	\end{cases}
	\]
\end{theorem}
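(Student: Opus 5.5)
The plan is to compute the matrix product $B_{k+1}(X)B_{k+1}(X)^{\top}$ entry by entry, using the definition of $B_{k+1}(X)$ given above together with \eqref{eq-2}. Fix $\sigma,\tau\in X(k)$. Then
\[
L_k^{\operatorname{up}}(X)_{\sigma,\tau}=\sum_{\rho\in X(k+1)}B_{k+1}(X)_{\sigma,\rho}\,B_{k+1}(X)_{\tau,\rho},
\]
and a summand is nonzero exactly when $\rho$ contains both $\sigma$ and $\tau$. In that case the summand equals $(\rho:\sigma)(\rho:\tau)$. The range $-1\le k\le\dim X-1$ is exactly where $B_{k+1}(X)$ is defined, and for $k=-1$ the only simplex is $\emptyset$. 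I will also adopt the convention $(\rho:\emptyset)=1$ for $\rho$ a vertex, consistent with the augmented boundary map, so the formula covers that case too.

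\textbf{Diagonal entries.} Take $\sigma=\tau$. Each $\rho\supset\sigma$ contributes $(\rho:\sigma)^2=1$. The sum is therefore $|N^+_X(\sigma)|=\deg^+_X(\sigma)$.

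\textbf{Off-diagonal entries.} Take $\sigma\ne\tau$. A $(k+1)$-simplex $\rho$ containing both must equal $\sigma\cup\tau$, which forces $|\sigma\cup\tau|=k+2$, i.e.\ $|\sigma\cap\tau|=k$, and requires $\sigma\cup\tau\in X$. When both conditions hold there is exactly one such $\rho$, namely $\rho=\sigma\cup\tau$. The entry is then $(\sigma\cup\tau:\sigma)(\sigma\cup\tau:\tau)$. When either condition fails, no such $\rho$ exists and the entry is $0$.

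The only delicate point is bookkeeping: the uniqueness of $\rho=\sigma\cup\tau$, and the fact that the sign uses the chosen orientations. Both follow directly from the definition of the incidence sign, so no step is a genuine obstacle.
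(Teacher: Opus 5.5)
Your proposal is correct. Expanding $L_k^{\operatorname{up}}(X)=B_{k+1}(X)B_{k+1}(X)^{\top}$ entrywise and splitting on whether some $(k+1)$-simplex contains both $\sigma$ and $\tau$ is the standard derivation. The paper does not reprove this result; it cites it from Horak--Jost alongside \eqref{eq-2}, and your treatment of $k=-1$ via the augmented map $\partial_0$ (which the paper defines, with $\partial_0[v]=\emptyset$) is fine, since only the diagonal entry $\deg_X^{+}(\emptyset)=f_0(X)$ occurs there and the sign convention is immaterial.
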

\begin{theorem}[{\cite{HJ2013}}]\label{thm-13}
	Let $X$ be a simplicial complex. For every integer $0\le  k\le \dim X$, the rows and columns of $L_k^{\operatorname{down}}(X)$ are indexed by the $k$-simplices of $X$, and, for all $\sigma,\tau\in X(k)$,
	\[L_k^{\operatorname{down}}(X)_{\sigma,\tau}=
	\begin{cases}k+1,& \text{if }\sigma=\tau,\\[2mm]
		(\sigma:\sigma\cap\tau)(\tau:\sigma\cap\tau),& \text{if }|\sigma\cap\tau|=k,\\[2mm]
		0,& \text{otherwise}.\end{cases}\]
\end{theorem}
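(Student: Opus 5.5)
The plan is to compute the entries of $L_k^{\operatorname{down}}(X)$ directly from the factorization \eqref{eq-6}, $L_k^{\operatorname{down}}(X)=B_k(X)^{\top}B_k(X)$, together with the explicit description of the entries of $B_k(X)$ given just before \eqref{eq-2}. For $\sigma,\tau\in X(k)$, matrix multiplication gives
\[
L_k^{\operatorname{down}}(X)_{\sigma,\tau}=\sum_{\eta\in X(k-1)}B_k(X)_{\eta,\sigma}\,B_k(X)_{\eta,\tau}=\sum_{\eta\in X(k-1)}(\sigma:\eta)(\tau:\eta).
\]
By definition of the incidence sign, a summand is nonzero only if $\eta\subset\sigma$ and $\eta\subset\tau$, that is, only if $\eta$ is a $(k-1)$-face of $\sigma\cap\tau$. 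Since $X$ is closed under subsets, every such $\eta$ lies in $X(k-1)$, so the sum runs exactly over the $k$-element subsets of $\sigma\cap\tau$. I will split into three cases.

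First, the diagonal case $\sigma=\tau$. Each summand is $(\sigma:\eta)^2=1$ for every facet $\eta$ of $\sigma$. A $k$-simplex has $k+1$ vertices, hence exactly $k+1$ subsets of size $k$, all of which lie in $X$. Thus the entry equals $k+1$.

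Second, the case $\sigma\neq\tau$ with $|\sigma\cap\tau|=k$. Then $\eta=\sigma\cap\tau$ is the unique common $(k-1)$-face. Hence the sum has exactly one term, namely $(\sigma:\sigma\cap\tau)(\tau:\sigma\cap\tau)$.

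Third, the case $\sigma\neq\tau$ with $|\sigma\cap\tau|<k$. There is no $k$-element subset of $\sigma\cap\tau$, so the sum is empty and the entry is $0$. Note that $|\sigma\cap\tau|=k+1$ forces $\sigma=\tau$, so these three cases are exhaustive.

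The only point needing care is the boundary case $k=0$, where $C_{-1}(X,\mathbb{R})$ is spanned by the empty simplex. Here I must check the convention $\partial_0[v]=\emptyset$, which gives $(v:\emptyset)=1$ for every vertex $v$. With this convention, $B_0(X)$ is the all-ones row vector and $L_0^{\operatorname{down}}(X)$ is the all-ones matrix. This agrees with the formula: the diagonal entries equal $k+1=1$, and distinct vertices satisfy $|\sigma\cap\tau|=0=k$ with sign product $1$. I expect this convention check to be the main (minor) obstacle; the rest is bookkeeping.
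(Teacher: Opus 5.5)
The paper gives no proof of this statement; it is quoted from Horak--Jost. Your entrywise expansion of $B_k(X)^{\top}B_k(X)$, split into the three cases, is the standard derivation behind it and is correct. Your $k=0$ check also matches the paper's conventions: its formula for $\partial_k$ gives $\partial_0[v]=[\emptyset]$ with sign $+1$, so $L_0^{\operatorname{down}}(X)$ is the all-ones matrix, as you found.
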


Let $\dim X=r$. For $-1\le k\le r-1$, let $\lambda_{k}(X)=\bigl(\lambda_{k,1}(X),\lambda_{k,2}(X),\ldots\bigr)$ denote the eigenvalue sequence of $L_{k}^{\operatorname{up}}(X)$, arranged in weakly decreasing order. For any $\sigma\in X(k)$, the \emph{degree} of $\sigma$ is defined by $\deg_k(X;\sigma)=\left|\left\{F\in X(r):\sigma\subseteq F\right\}\right|.$
The $k$-degree sequence of $X$, denoted by
\[d_k(X)=\bigl(\deg_{k,1}(X),\cdots,\deg_{k,f_k(X)}(X)\bigr),\]
	is the sequence of degrees of all $k$-simplices of $X$ arranged in nonincreasing order. The \emph{conjugate $k$-degree sequence} of $X$, denoted by
	\[d_k^{\top}(X)=\bigl(\deg_{k,1}^{\top}(X),\deg_{k,2}^{\top}(X),\cdots\bigr),\]
	is the conjugate of $d_k(X)$, defined by $\deg_{k,j}^{\top}(X):=\left|\left\{t:\deg_{k,t}(X)\ge  j\right\}\right|$ for $j\ge  1$. 
When comparing two sequences, we pad them with zeros whenever necessary. Let $\mathbf{x}=(x_1,x_2,\ldots)$ and $\mathbf{y}=(y_1,y_2,\ldots)$ be two nonnegative, nonincreasing, finitely supported sequences. We say that $\mathbf{x}$ is \emph{majorized} by
$\mathbf{y}$, denoted by $\mathbf{x}\preceq\mathbf{y}$, if $\sum_{i=1}^qx_i\le \sum_{i=1}^qy_i$ for $q\ge 1$ and $\sum_{i\ge 1}x_i=\sum_{i\ge 1}y_i$. 	

We next recall the local down-Laplacian construction of Zhang, Song, and Fan~\cite{ZSF2026}. Let $r\ge 2$. For each $\eta\in X(r-2)$, define the local boundary matrix
$B_r(X;\eta)$ by
\[\bigl(B_r(X;\eta)\bigr)_{\tau,\sigma}=\begin{cases}(\sigma:\tau),
	& \text{if }\eta\subset\tau\subset\sigma,\\
	0,	& \text{otherwise},\end{cases}\]
where $\tau\in X(r-1)$ and $\sigma\in X(r)$. Let $A_\eta=B_r(X;\eta)^{\top}B_r(X;\eta)$ and $H_\eta=\operatorname{lk}_X\eta.$
Since $\eta$ has dimension $r-2$ and $X$ has dimension $r$, $H_\eta$ is a graph. For every $v\in V(H_\eta)$, it holds
\[\deg_{H_\eta}(v)=\deg_{r-1}\bigl(X;\eta\cup\{v\}\bigr).\] 
For later use, let $\mathcal{F}_\eta=\{\sigma\in X(r):\eta\subset\sigma\}$. Moreover, we have
	\begin{equation}\label{eq-local-diagonal}
		(A_\eta)_{\sigma,\sigma}=2\qquad\text{for every }\sigma\in\mathcal{F}_\eta,
	\end{equation}
	since exactly two $(r-1)$-faces lie strictly between $\eta$ and $\sigma$. The following two lemmas are due to Zhang, Song, and
	Fan~\cite{ZSF2026}.
	\begin{lemma}[{\cite[Lemma~3.1]{ZSF2026}}]\label{lem-1}
		Let $X$ be an $r$-dimensional simplicial complex with $r\ge 1$.	For every $\eta\in X(r-2)$, there exists a signed permutation matrix
		$P_\eta$ such that
		\[P_\eta^{\top}A_\eta P_\eta=	L_1^{\operatorname{down}}(H_\eta)\oplus O.\]
	\end{lemma}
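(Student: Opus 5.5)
The plan is to show that, after discarding the zero columns, $B_r(X;\eta)$ is a signed and rescaled copy of the vertex--edge incidence matrix $B_1(H_\eta)$ of the link graph. The identity then follows by forming the Gram matrix. Throughout, fix $\eta\in X(r-2)$ and put a fixed linear order on $V$.

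\emph{Step 1: the support of $A_\eta$.} If $\sigma\in X(r)\setminus\mathcal{F}_\eta$, then no $\tau$ satisfies $\eta\subset\tau\subset\sigma$. So the column of $B_r(X;\eta)$ indexed by $\sigma$ is zero, and the row and column of $A_\eta$ indexed by $\sigma$ vanish. Likewise, only rows $\tau\supset\eta$ can be nonzero. Hence, after a permutation putting $\mathcal{F}_\eta$ first, $A_\eta=M^{\top}M\oplus O$, where $M$ is the submatrix of $B_r(X;\eta)$ with rows $\{\tau\in X(r-1):\eta\subset\tau\}$ and columns $\mathcal{F}_\eta$.

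\emph{Step 2: identifying the index sets with $H_\eta$.} Since $\dim\operatorname{lk}_X\eta\le 1$, the map $v\mapsto\eta\cup\{v\}$ is a bijection from $V(H_\eta)$ onto the row set of $M$. The map $\{u,v\}\mapsto\eta\cup\{u,v\}$ is a bijection from $E(H_\eta)$ onto $\mathcal{F}_\eta$: each $\sigma\in\mathcal{F}_\eta$ has exactly two vertices outside $\eta$. Orient each edge of $H_\eta$ as $[u,v]$ with $u<v$, so that $(e:v)=1$ and $(e:u)=-1$ in $B_1(H_\eta)$.

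\emph{Step 3: sign bookkeeping (the only real computation).} Let $[\eta]$ be the increasing ordering of $\eta$. For $\sigma=\eta\cup\{u,v\}$ with $u<v$, let $\beta_\sigma=\pm1$ be the sign comparing the given orientation $[\sigma]$ with $[\eta,u,v]$. For $\tau=\eta\cup\{w\}$, let $\alpha_\tau=\pm1$ be the sign comparing $[\tau]$ with $[\eta,w]$.

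In $[\eta,u,v]$, the vertex $u$ sits in position $r-1$ and $v$ in position $r$ (counting from $0$). Deleting $u$ therefore gives $(-1)^{r-1}[\eta,v]$, and deleting $v$ gives $(-1)^{r}[\eta,u]$. Hence, for $w\in\{u,v\}$ and $e=[u,v]$,
\[
(\sigma:\eta\cup\{w\})=(-1)^{r-1}\,\alpha_{\eta\cup\{w\}}\,\beta_\sigma\,(e:w),
\]
since $(e:v)=1$ and $(e:u)=-1$ absorb the extra factor $-1$. In matrix form,
\[
M=(-1)^{r-1}D_\alpha B_1(H_\eta)D_\beta,
\]
with $D_\alpha,D_\beta$ diagonal $\pm1$ matrices.

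\emph{Step 4: conclusion.} Because $D_\alpha^2=I$, we get $M^{\top}M=D_\beta B_1(H_\eta)^{\top}B_1(H_\eta)D_\beta=D_\beta L_1^{\operatorname{down}}(H_\eta)D_\beta$. Take $P_\eta$ to be the permutation placing $\mathcal{F}_\eta$ first, ordered by the corresponding edges, composed with the sign matrix $D_\beta\oplus I$. This gives
\[
P_\eta^{\top}A_\eta P_\eta=L_1^{\operatorname{down}}(H_\eta)\oplus O.
\]
The diagonal entries, both equal to $2$ (compare \eqref{eq-local-diagonal} and Theorem~\ref{thm-13}), serve as a consistency check.

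The case $r=1$ fits the same scheme: then $\eta=\emptyset$ and $H_\eta=X$. The only delicate point is the sign computation in Step 3. Everything hinges on placing $\eta$ first in the reference orderings, so that the deletion signs differ between $u$ and $v$ by exactly the factor realized by $(e:u)=-(e:v)$.
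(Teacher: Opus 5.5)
Your proof is correct and complete. The paper gives no argument for this lemma; it is quoted from Zhang, Song, and Fan, so there is no in-paper proof to compare with. Your route is the natural self-contained one: discard the zero rows and columns, then show that the surviving block of $B_r(X;\eta)$ equals $(-1)^{r-1}D_\alpha B_1(H_\eta)D_\beta$ under the bijections $v\mapsto\eta\cup\{v\}$ and $\{u,v\}\mapsto\eta\cup\{u,v\}$. The sign computation checks out, and so does the case $r=1$, where $A_\emptyset=L_1^{\operatorname{down}}(X)$ and all $\alpha_\tau=1$.

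One cosmetic point. The edges of $H_\eta$ carry the orientation they already have in $X$, which need not be $[u,v]$ with $u<v$. Your computation never uses $u<v$, however. Running it with the reference ordering $[\eta,a,b]$, where $[a,b]$ is the given orientation of the edge, yields the same identity for any orientation of $H_\eta$. Equivalently, the extra diagonal signs can be absorbed into $P_\eta$.
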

	\begin{lemma}[{\cite[Lemma~3.2]{ZSF2026}}]\label{lem-2}
		Let $X$ be an $r$-dimensional simplicial complex with $r\ge 1$. Then
		\[rL_r^{\operatorname{down}}(X)=\sum_{\eta\in X(r-2)}A_\eta.\]
	\end{lemma}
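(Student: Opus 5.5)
We prove the identity $rL_r^{\operatorname{down}}(X)=\sum_{\eta\in X(r-2)}A_\eta$ by comparing the two sides entry by entry. Both are matrices indexed by $X(r)\times X(r)$, and Theorem~\ref{thm-13} gives the entries of $L_r^{\operatorname{down}}(X)$ explicitly. So it suffices to compute $(A_\eta)_{\sigma,\sigma'}$ for fixed $\sigma,\sigma'\in X(r)$ and then count the $\eta$ that contribute.

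First expand the product $A_\eta=B_r(X;\eta)^{\top}B_r(X;\eta)$:
\[
(A_\eta)_{\sigma,\sigma'}=\sum_{\substack{\tau\in X(r-1)\\ \eta\subset\tau\subset\sigma,\ \tau\subset\sigma'}}(\sigma:\tau)(\sigma':\tau).
\]

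\emph{Diagonal entries.} Take $\sigma=\sigma'$. Since $(\sigma:\tau)^2=1$, the entry $(A_\eta)_{\sigma,\sigma}$ counts the $(r-1)$-faces $\tau$ with $\eta\subset\tau\subset\sigma$.
\begin{itemize}
\item If $\eta\not\subset\sigma$, there are no such $\tau$, so the entry is $0$.
\item If $\eta\subset\sigma$, then $|\sigma\setminus\eta|=2$, so there are exactly two such $\tau$ and the entry is $2$, as in \eqref{eq-local-diagonal}.
\end{itemize}
The number of $(r-2)$-faces of $\sigma$ is $\binom{r+1}{2}$, and each lies in $X(r-2)$ because $X$ is closed under subsets. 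Hence
\[
\sum_{\eta}(A_\eta)_{\sigma,\sigma}=2\binom{r+1}{2}=r(r+1),
\]
which is $r$ times the diagonal entry $r+1$ given by Theorem~\ref{thm-13}.

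\emph{Off-diagonal entries.} Take $\sigma\neq\sigma'$. A nonzero term requires an $(r-1)$-simplex $\tau\subseteq\sigma\cap\sigma'$. Since $\sigma\neq\sigma'$, we have $|\sigma\cap\sigma'|\le r$, so this forces $|\sigma\cap\sigma'|=r$ and $\tau=\sigma\cap\sigma'$.
\begin{itemize}
\item If $|\sigma\cap\sigma'|<r$, every $A_\eta$ has a zero $(\sigma,\sigma')$-entry, matching the $0$ in Theorem~\ref{thm-13}.
\item If $|\sigma\cap\sigma'|=r$, set $\tau=\sigma\cap\sigma'$. Then $(A_\eta)_{\sigma,\sigma'}=(\sigma:\tau)(\sigma':\tau)$ when $\eta\subset\tau$, and $0$ otherwise.
\end{itemize}
In the second case, the faces $\eta\subset\tau$ of dimension $r-2$ are obtained by deleting one of the $r$ vertices of $\tau$, so there are exactly $r$ of them. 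Therefore
\[
\sum_\eta(A_\eta)_{\sigma,\sigma'}=r\,(\sigma:\sigma\cap\sigma')(\sigma':\sigma\cap\sigma'),
\]
which is $r$ times the corresponding entry in Theorem~\ref{thm-13}.

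The only delicate points are the sign bookkeeping in the off-diagonal case and the boundary case $r=1$. The sign is handled by the observation that $\tau$ is uniquely determined as $\sigma\cap\sigma'$. When $r=1$, we have $\eta=\emptyset$, the single $A_\emptyset$ equals $L_1^{\operatorname{down}}(X)$, and the same count applies: $\binom{2}{2}=1$ face of dimension $-1$, and $r=1$ choice of $\eta$ inside $\tau$. No real obstacle is expected beyond this careful counting.
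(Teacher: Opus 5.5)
Your proposal is correct and complete; there is no gap. The diagonal count $2\binom{r+1}{2}=r(r+1)$ checks out. So does the observation that an off-diagonal contribution forces $\tau=\sigma\cap\sigma'$ with $|\sigma\cap\sigma'|=r$, and the count of exactly $r$ faces $\eta\subset\tau$, each contributing the same sign product. The degenerate case $r=1$, where $A_\emptyset=L_1^{\operatorname{down}}(X)$, is also handled correctly.

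The paper gives no proof of this lemma; it is quoted from Zhang, Song, and Fan. So there is no in-paper argument to compare yours against.

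A slightly more economical route avoids Theorem~\ref{thm-13} and the diagonal/off-diagonal split. Note that $B_r(X;\eta)$ is just $B_r(X)$ with every row indexed by some $\tau\not\supset\eta$ replaced by zero. Write $\mathbf{b}_\tau$ for the $\tau$-th row of $B_r(X)$. Then $A_\eta=\sum_{\tau\supset\eta}\mathbf{b}_\tau^{\top}\mathbf{b}_\tau$ and $L_r^{\operatorname{down}}(X)=\sum_{\tau\in X(r-1)}\mathbf{b}_\tau^{\top}\mathbf{b}_\tau$. Summing over $\eta\in X(r-2)$ therefore counts each rank-one term once for each of the $r$ faces of dimension $r-2$ of $\tau$.

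This is the same double count as yours, organized by $(r-1)$-faces instead of by matrix entries. Your diagonal count is the special case $(r+1)\cdot r=2\binom{r+1}{2}$. The advantage of your entrywise version is that it makes the sign agreement with Theorem~\ref{thm-13} explicit. The row-decomposition version makes it automatic, since both sides are built from the same rows of $B_r(X)$.
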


	\section{Equality conditions in the majorization inequality}\label{sc-3}
	
	For a real symmetric matrix $M$ of order $m$, let $\lambda_1(M)\ge \lambda_2(M)\ge \cdots\ge \lambda_m(M)$ be all eigenvalues of $M$. For every positive integer $q$, define
	\[\Phi_q(M)=\sum_{i=1}^{q}\lambda_i(M),\]
	where we set $\lambda_i(M)=0$ for $i>m$.  For $1\le  q\le  m$, a $q$-dimensional subspace $U\subseteq\mathbb{R}^m$ is called a \emph{maximizing subspace} for $\Phi_q(M)$ if there exists a matrix $V\in\mathbb{R}^{m\times q}$ whose columns form an orthonormal basis of $U$ such that $\operatorname{tr}\left(V^{\top}MV\right)=\Phi_q(M)$. 
	
	We shall repeatedly use the following Ky Fan maximum principle. 
	\begin{lemma}[Ky Fan's maximum principle {\cite[Corollary 4.3.39]{HJ2012}}]\label{lem-ky-fan}
		Let $M$ be a real symmetric matrix of order $m$. Then, for every
		integer $1\le  q\le  m$,\[\Phi_q(M)=\max_{\substack{V\in\mathbb{R}^{m\times q}\\[1mm] V^{\top}V=I_q}}\operatorname{tr}\left(V^{\top}MV\right).\]
	\end{lemma}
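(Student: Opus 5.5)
The plan is to reduce the statement to a linear optimization over weights by diagonalizing $M$. By the spectral theorem, write $M=Q\Lambda Q^{\top}$ with $Q$ orthogonal and $\Lambda=\operatorname{diag}(\lambda_1(M),\ldots,\lambda_m(M))$. Let $q_1,\ldots,q_m$ be the columns of $Q$, so that $Mq_i=\lambda_i(M)q_i$.

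\emph{Lower bound.} Take $V=[q_1,\ldots,q_q]$. Then $V^{\top}V=I_q$ and $V^{\top}MV=\operatorname{diag}(\lambda_1(M),\ldots,\lambda_q(M))$. Hence $\operatorname{tr}(V^{\top}MV)=\Phi_q(M)$, so the maximum is at least $\Phi_q(M)$.

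\emph{Upper bound.} Fix any $V\in\mathbb{R}^{m\times q}$ with $V^{\top}V=I_q$, and set $W=Q^{\top}V$, which again has orthonormal columns. Then
\[
\operatorname{tr}(V^{\top}MV)=\operatorname{tr}(W^{\top}\Lambda W)=\sum_{i=1}^{m}\lambda_i(M)\,w_i,\qquad w_i=\sum_{j=1}^{q}W_{ij}^2 .
\]
The weights satisfy two constraints.
\begin{itemize}
\item[(a)] $\sum_i w_i=\operatorname{tr}(W^{\top}W)=q$.
\item[(b)] $0\le w_i\le 1$ for every $i$.
\end{itemize}
Constraint (b) is the one step that needs an argument. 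The matrix $P=WW^{\top}$ is an orthogonal projection, since $P^2=W(W^{\top}W)W^{\top}=P$ and $P^{\top}=P$. Its diagonal entries are $w_i=e_i^{\top}Pe_i=\|Pe_i\|^2\le\|e_i\|^2=1$, and they are clearly nonnegative.

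Given (a) and (b), write $\lambda_i=\lambda_i(M)$ and subtract $\lambda_q\sum_i w_i=q\lambda_q$:
\[
\sum_{i=1}^m\lambda_i w_i-q\lambda_q=\sum_{i\le q}(\lambda_i-\lambda_q)w_i+\sum_{i>q}(\lambda_i-\lambda_q)w_i .
\]
In the first sum each coefficient $\lambda_i-\lambda_q$ is nonnegative and $w_i\le1$, so it is at most $\sum_{i\le q}(\lambda_i-\lambda_q)$. In the second sum each coefficient is nonpositive and $w_i\ge0$, so it is at most $0$. Therefore
\[
\sum_{i=1}^m\lambda_i w_i\le \sum_{i\le q}(\lambda_i-\lambda_q)+q\lambda_q=\Phi_q(M).
\]
This holds for every admissible $V$, which gives the upper bound and, combined with the lower bound, the stated equality.

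The main obstacle is constraint (b); once it is in place, the rest is an elementary linear (rearrangement-type) estimate.
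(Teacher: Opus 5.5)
Your proof is correct and complete. The constraints $\sum_i w_i=q$ and $0\le w_i\le 1$ are properly justified. Subtracting $q\lambda_q$, rather than comparing against $0$, makes the final estimate valid without any sign assumption on the eigenvalues.

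The paper gives no proof of this lemma; it simply cites Horn--Johnson. There, as far as I recall, the maximum principle is obtained from the Poincar\'e separation (interlacing) inequalities $\lambda_i(V^{\top}MV)\le\lambda_i(M)$ for $1\le i\le q$, by summing over $i$. Your route is genuinely different and more elementary: it needs only the spectral theorem, not Courant--Fischer or interlacing.

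Your route also fits the rest of the paper well. Your weights $w_i=\|V^{\top}q_i\|^2$ are exactly the quantities $p_i$ the paper introduces in the proof of Lemma~\ref{lem-common-subspace}(ii). The paper uses the same two constraints and the same comparison against $\lambda_q$ in Claim~\ref{c-1}. Your computation in fact yields the exact deficit
\[
\Phi_q(M)-\operatorname{tr}\left(V^{\top}MV\right)=\sum_{i\le q}(\lambda_i-\lambda_q)(1-w_i)+\sum_{i>q}(\lambda_q-\lambda_i)w_i,
\]
a sum of nonnegative terms. From it the equality information used later can be read off directly; for example, for positive semidefinite $M$ of rank $\rho>q$ you get the bound $\lambda_q(M)\sum_{i>\rho}w_i$ of Claim~\ref{c-1}.

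In exchange, the interlacing route gives stronger termwise inequalities between the eigenvalues of the compression $V^{\top}MV$ and those of $M$, which your weight argument does not provide.
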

By repeated application of the classical Ky Fan inequality \cite[Proposition~A.6]{MOA2011}, we obtain the following finite-sum
form.
\begin{lemma}[Ky Fan's inequality {\cite[Proposition~A.6]{MOA2011}}]\label{lem-kyfan-inequality}
	Let $M_1,\ldots,M_s\in\mathbb{R}^{m\times m}$ be symmetric matrices, and let $1\le  q\le  m$. Then
\[\Phi_q\left(\sum_{j=1}^{s}M_j\right)\le  \sum_{j=1}^{s}\Phi_q(M_j).\]
\end{lemma}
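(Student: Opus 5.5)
The finite-sum form will be obtained by induction on $s$ from the two-matrix Ky Fan inequality $\Phi_q(M+N)\le\Phi_q(M)+\Phi_q(N)$. That two-matrix inequality can itself be derived from Lemma~\ref{lem-ky-fan}.

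\emph{Two-matrix case.} Let $M,N$ be symmetric of order $m$ and $1\le q\le m$. By Lemma~\ref{lem-ky-fan}, choose $V\in\mathbb{R}^{m\times q}$ with $V^{\top}V=I_q$ and
\[
\operatorname{tr}\bigl(V^{\top}(M+N)V\bigr)=\Phi_q(M+N).
\]
Linearity of the trace gives
\[
\Phi_q(M+N)=\operatorname{tr}(V^{\top}MV)+\operatorname{tr}(V^{\top}NV).
\]
Applying Lemma~\ref{lem-ky-fan} again, now as an upper bound for $M$ and for $N$ separately (the same $V$ is admissible in each maximization), each of the two terms is at most $\Phi_q(M)$ and $\Phi_q(N)$ respectively.

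\emph{Induction on $s$.} The case $s=1$ is trivial. For $s\ge2$, put $M=\sum_{j=1}^{s-1}M_j$, which is again symmetric of order $m$, and $N=M_s$. The two-matrix case gives $\Phi_q\bigl(\sum_{j=1}^{s}M_j\bigr)\le\Phi_q(M)+\Phi_q(M_s)$. The induction hypothesis bounds $\Phi_q(M)$ by $\sum_{j=1}^{s-1}\Phi_q(M_j)$, which completes the induction.

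There is no real obstacle here. The one point to check is that the constraint $1\le q\le m$ is the same for every matrix involved, since all have order $m$, so Lemma~\ref{lem-ky-fan} applies uniformly to each of them.

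This argument also prepares the equality analysis used later. Equality holds in the finite sum if and only if a single maximizing $V$ for $\sum_j M_j$ is simultaneously maximizing for every $M_j$. That is, the column space of $V$ is a common maximizing subspace. This follows because each of the inequalities $\operatorname{tr}(V^{\top}M_jV)\le\Phi_q(M_j)$ must then be tight.
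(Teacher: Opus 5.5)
Your proposal is correct and matches the paper, which obtains the finite-sum form by repeated application of the two-matrix Ky Fan inequality; the only difference is that you derive that two-matrix case from Lemma~\ref{lem-ky-fan} instead of citing it. Note that the same argument works without induction: take a single $V$ that is maximizing for $\sum_j M_j$ and bound each $\operatorname{tr}(V^{\top}M_jV)$ by $\Phi_q(M_j)$, which is exactly what the paper does in the proof of Lemma~\ref{lem-common-subspace}.
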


The equality characterization for the Ky Fan maximum principle is standard; see, for example, the discussion in \cite{F2015}. We shall use the following consequence concerning simultaneous equality in the Ky Fan subadditivity inequality for positive semidefinite matrices. For
completeness, we provide a proof.
\begin{lemma}\label{lem-common-subspace}
	Let $M_1,\ldots,M_s\in\mathbb{R}^{m\times m}$ be symmetric positive semidefinite matrices, and let $1\le  q<m$. Suppose that
	\begin{equation}\label{eq-common-equality}
		\Phi_q\left(\sum_{j=1}^{s}M_j\right)=\sum_{j=1}^{s}\Phi_q(M_j).
	\end{equation}
	Then there exists a $q$-dimensional subspace
	$U\subseteq\mathbb{R}^m$ which is a maximizing subspace for
	$\Phi_q(M_j)$ for every $j$. Moreover,
	\begin{enumerate}[label=\emph{(\roman*)}]
		\item
		if $\operatorname{rank}M_j\le  q$, then
		$\operatorname{im}M_j\subseteq U$;
		\item
		if $\operatorname{rank}M_j>q$, then
		$U\subseteq\operatorname{im}M_j$.
	\end{enumerate}
\end{lemma}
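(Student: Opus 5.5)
Set $M=\sum_{j=1}^{s}M_j$ and choose $V\in\mathbb{R}^{m\times q}$ with orthonormal columns spanning a maximizing subspace $U$ for $\Phi_q(M)$. This $V$ will serve as the common subspace. By Lemma~\ref{lem-ky-fan} applied to $M$ and then to each $M_j$,
\[
\Phi_q(M)=\operatorname{tr}(V^{\top}MV)=\sum_{j=1}^{s}\operatorname{tr}(V^{\top}M_jV)\le\sum_{j=1}^{s}\Phi_q(M_j).
\]
Hypothesis \eqref{eq-common-equality} forces equality in the sum. Since each term satisfies $\operatorname{tr}(V^{\top}M_jV)\le\Phi_q(M_j)$, every term must be an equality. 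Hence $U$ is a maximizing subspace for every $\Phi_q(M_j)$, which gives the first assertion.

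For (i) and (ii) we use the standard equality case of Ky Fan's principle for a single PSD matrix $N$ with eigenvalues $\mu_1\ge\cdots\ge\mu_m\ge0$. Write $\operatorname{tr}(V^{\top}NV)=\sum_i\mu_i w_i$ with $w_i=\|P_U e_i\|^2$, where $e_i$ is an orthonormal eigenbasis. The weights satisfy $0\le w_i\le1$ and $\sum_i w_i=q$. Then
\[
\sum_{i\le q}\mu_i-\sum_i\mu_iw_i=\sum_{i\le q}(\mu_i-\mu_q)(1-w_i)+\sum_{i>q}(\mu_q-\mu_i)w_i ,
\]
using $\sum_{i\le q}(1-w_i)=\sum_{i>q}w_i$. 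Equality therefore forces two things:
\begin{itemize}
\item $w_i=1$ whenever $\mu_i>\mu_q$, so these eigenvectors lie in $U$;
\item $w_i=0$ whenever $\mu_i<\mu_q$, so $U$ is orthogonal to these eigenvectors.
\end{itemize}

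Now split by rank. If $\operatorname{rank}N\le q$, compare $\mu_q$ with $0$. When $\mu_q>0$, the rank is exactly $q$ and $\mu_{q+1}=0<\mu_q$, so $U$ is orthogonal to $\ker N$ and has dimension $q=\operatorname{rank}N$; hence $U=\operatorname{im}N$. When $\mu_q=0$, every eigenvector with $\mu_i>0=\mu_q$ lies in $U$, so $\operatorname{im}N\subseteq U$. If $\operatorname{rank}N>q$, then $\mu_q>0$ and every eigenvector with $\mu_i=0<\mu_q$ is orthogonal to $U$; thus $U\perp\ker N$, i.e.\ $U\subseteq\operatorname{im}N$ by symmetry. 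Applying this to each $M_j$ with the common $U$ gives (i) and (ii).

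The main obstacle is the bookkeeping in the eigenvalue-weight identity, in particular handling ties at $\mu_q$. This is exactly why the case split is made according to whether $\mu_q$ is positive or zero.
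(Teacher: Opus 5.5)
Your proof is correct. The first part, taking $U$ to be a maximizing subspace for $\Phi_q\bigl(\sum_j M_j\bigr)$ and forcing termwise equality, is the same as the paper's. Your treatment of (ii) is also the paper's weight argument: your $w_i$ are its $p_i$, and your identity is the exact form of its lower bound $\Phi_q(M_j)-\operatorname{tr}(V^{\top}M_jV)\ge\lambda_q(M_j)\delta$.

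The genuine difference is in (i). The paper avoids eigenvalue bookkeeping there. It observes that $\operatorname{rank}M_j\le q$ gives $\Phi_q(M_j)=\operatorname{tr}M_j$, hence $\operatorname{tr}(W^{\top}M_jW)=0$ for an orthonormal basis $W$ of $U^{\perp}$. From this it gets $M_j^{1/2}W=0$, so $U^{\perp}\subseteq\ker M_j$.

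You instead push (i) through the same identity. This forces your case split on whether $\mu_q>0$, because eigenvectors with eigenvalue exactly $\mu_q$ are not individually forced into $U$. In the subcase $\mu_q>0$ you resolve this correctly with $U\perp\ker N$ and $\dim U=q=\operatorname{rank}N$, which uses $q<m$.

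Your route extracts the full equality structure from one exact identity: $U$ contains every eigenspace with eigenvalue above $\mu_q$ and is orthogonal to every eigenspace below it. Both (i) and (ii) are then read off uniformly. The paper's argument for (i) is shorter and needs no handling of ties.
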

\begin{proof}
	By Lemma~\ref{lem-ky-fan}, there exists a matrix
	$V\in\mathbb{R}^{m\times q}$ satisfying $V^{\top}V=I_q$ such that
	\[\operatorname{tr}\left(V^{\top}\left(\sum_{j=1}^{s}M_j\right)V\right)=\Phi_q\left(\sum_{j=1}^{s}M_j\right).\]
Let $U=\operatorname{im}V$. By \eqref{eq-common-equality}, we have
\begin{equation}\label{eq-1}
	\begin{aligned}
		\sum_{j=1}^{s}\left(\Phi_q(M_j)-\operatorname{tr}\left(V^{\top}M_jV\right)\right)&=	\sum_{j=1}^{s}\Phi_q(M_j)-\sum_{j=1}^{s}\operatorname{tr}\left(V^{\top}M_jV\right)\\&=\Phi_q\left(\sum_{j=1}^{s}M_j\right)-\operatorname{tr}\left(V^{\top}\left(\sum_{j=1}^{s}M_j\right)V\right)=0.
	\end{aligned}
\end{equation}
	It follows from  Lemma~\ref{lem-ky-fan} that $\Phi_q(M_j)\ge \operatorname{tr}\left(V^{\top}M_jV\right)$ for all $j$. Combining this with \eqref{eq-1}, we obtain $\operatorname{tr}\left(V^{\top}M_jV\right)=\Phi_q(M_j)$ for all $j$. This implies that $U$ is a maximizing $q$-subspace for every $M_j$.
	
	We next prove (i). Suppose that $\operatorname{rank}M_j\le  q$. Since $M_j$ is positive semidefinite, all its eigenvalues are
nonnegative, and exactly $\operatorname{rank}M_j$ of them are positive. Hence, as $\operatorname{rank}M_j\le  q$, the first $q$ eigenvalues contain all the nonzero eigenvalues of $M_j$. Therefore,
\begin{equation}\label{eq-3}
	\Phi_q(M_j)=\sum_{i=1}^{q}\lambda_i(M_j)=\sum_{i=1}^{m}\lambda_i(M_j)=\operatorname{tr}M_j.
\end{equation}
Choose $W\in\mathbb{R}^{m\times(m-q)}$ such that $[V\ W]$ is an orthogonal matrix. Then $U^\perp=\operatorname{im}W$ and $VV^{\top}+WW^{\top}=I_m.$
	Consequently,
	\[
	\begin{aligned}
	 \operatorname{tr}M_j
	 &=\operatorname{tr}\left(M_jVV^{\top}\right)
	   +\operatorname{tr}\left(M_jWW^{\top}\right)\\
	 &=\operatorname{tr}\left(V^{\top}M_jV\right)
	   +\operatorname{tr}\left(W^{\top}M_jW\right)\\
	 &=\Phi_q(M_j)+\operatorname{tr}\left(W^{\top}M_jW\right).
	\end{aligned}
	\]
	It follows from \eqref{eq-3} that $	\operatorname{tr}\left(W^{\top}M_jW\right)=0$.
	Since $M_j$ is positive semidefinite, it has a unique positive
	semidefinite square root $M_j^{1/2}$. Hence, we have
	\[\operatorname{tr}\left(\left(M_j^{1/2}W\right)^{\top}M_j^{1/2}W\right)=\operatorname{tr}\left(W^{\top}M_jW\right)=0,
	\]
	and thus $	M_j^{1/2}W=0$. This implies that $U^\perp=\operatorname{im}W\subseteq\ker M_j$. Since $M_j$ is symmetric, it follows that
$\operatorname{im}M_j=(\ker M_j)^{\perp}\subseteq U$.

Now we prove (ii). Suppose that $\operatorname{rank}M_j>q$. Let $\mathbf{x}_1,\ldots,\mathbf{x}_m$ be an orthonormal eigenbasis
of $M_j$ corresponding to the eigenvalues $\lambda_1(M_j),\ldots,\lambda_m(M_j)$, respectively. Then $\sum_{i=1}^{m}\mathbf{x}_i\mathbf{x}_i^{\top}=I_m$, and thus \begin{equation}\label{eq-7}
	M_j=M_j\sum_{i=1}^{m}\mathbf{x}_i\mathbf{x}_i^{\top}=\sum_{i=1}^{m}\lambda_i(M_j)\mathbf{x}_i\mathbf{x}_i^{\top}.
\end{equation}
Write $\rho=\operatorname{rank}M_j$. Since $M_j$ is positive semidefinite, we have
\[\lambda_1(M_j)\ge \cdots\ge \lambda_\rho(M_j)>0=\lambda_{\rho+1}(M_j)=\cdots=\lambda_m(M_j).\]
Since $\rho>q$, we have $\lambda_q(M_j)>0$. Write $V=[\mathbf{v}_1,\ldots,\mathbf{v}_q]$ and $W=[\mathbf{w}_1,\ldots,\mathbf{w}_{m-q}]$.
Since $[V\ W]$ is an orthogonal matrix, the vectors $\mathbf{v}_1,\ldots,\mathbf{v}_q, \mathbf{w}_1,\ldots,\mathbf{w}_{m-q}$ are orthonormal. For 
$1\le  i\le  m$, we have \[\mathbf{x}_i=\sum_{\ell=1}^{q}
\left(\mathbf{v}_{\ell}^{\top}\mathbf{x}_i\right)\mathbf{v}_{\ell}+\sum_{k=1}^{m-q}\left(\mathbf{w}_{k}^{\top}\mathbf{x}_i\right)\mathbf{w}_{k}.\]
Since $\mathbf{x}_i$ is a unit vector, we have \begin{equation}\label{eq-4}
	1=\mathbf{x}_i^{\top}\mathbf{x}_i=\sum_{\ell=1}^{q}\left(\mathbf{v}_{\ell}^{\top}\mathbf{x}_i\right)^2+\sum_{k=1}^{m-q}\left(\mathbf{w}_{k}^{\top}\mathbf{x}_i\right)^2.
\end{equation}
Let $p_i=\sum_{\ell=1}^{q}\left(\mathbf{v}_{\ell}^{\top}\mathbf{x}_i\right)^2$. By \eqref{eq-4}, we have $0\le  p_i\le 1$. Moreover, by \eqref{eq-7}, we obtain
{\small \begin{equation}\label{eq-8}
	\sum_{i=1}^{m}p_i=\sum_{i=1}^{m}\sum_{\ell=1}^{q}\left(\mathbf{v}_{\ell}^{\top}\mathbf{x}_i\right)^2=\sum_{i=1}^{m}\sum_{\ell=1}^{q}\mathbf{v}_{\ell}^{\top}\mathbf{x}_i\mathbf{x}_i^\top\mathbf{v}_{\ell}=\sum_{\ell=1}^{q}\mathbf{v}_{\ell}^{\top}\left(\sum_{i=1}^{m}\mathbf{x}_i\mathbf{x}_i^{\top}\right)\mathbf{v}_{\ell}=\sum_{\ell=1}^{q}\mathbf{v}_{\ell}^{\top}\mathbf{v}_{\ell}=q.
\end{equation}}
Therefore, by \eqref{eq-7} again,
\begin{equation}\label{eq-5}
	\begin{aligned}
		\operatorname{tr}\left(V^{\top}M_jV\right)&=\sum_{\ell=1}^{q}\mathbf{v}_{\ell}^{\top}M_j\mathbf{v}_{\ell}=\sum_{\ell=1}^{q}\sum_{i=1}^{m}\lambda_i(M_j)\left(\mathbf{v}_{\ell}^{\top}\mathbf{x}_i\right)^2=\sum_{i=1}^{m}\lambda_i(M_j)p_i.
	\end{aligned}
\end{equation}
\begin{claim}\label{c-1}
	For every $i>\rho$, $p_i=0$.
	\end{claim}
\begin{proof}
	Let $\delta=\sum_{i=\rho+1}^{m}p_i$. Suppose, to the contrary, that there exists $i>\rho$ such that $p_i>0$. Since $p_i\ge 0$ for any $i$, we have
$\delta>0$. Since $\lambda_i(M_j)=0$ for $i>\rho$, it follows from
\eqref{eq-5} that
\[\operatorname{tr}\left(V^{\top}M_jV\right)=\sum_{i=1}^{\rho}\lambda_i(M_j)p_i.\]
Combining this with \eqref{eq-8}, we obtain
\[\begin{aligned}
	\Phi_q(M_j)-\operatorname{tr}\left(V^{\top}M_jV\right)&=\sum_{i=1}^{q}\lambda_i(M_j)-\sum_{i=1}^{\rho}\lambda_i(M_j)p_i=
	\sum_{i=1}^{q}\lambda_i(M_j)(1-p_i)-\sum_{i=q+1}^{\rho}\lambda_i(M_j)p_i\\&\ge \lambda_q(M_j)\left(\sum_{i=1}^{q}(1-p_i)-\sum_{i=q+1}^{\rho}p_i\right)=\lambda_q(M_j)\left(q-\sum_{i=1}^{\rho}p_i\right)\\&=\lambda_q(M_j)\left(\sum_{i=1}^{m}p_i-\sum_{i=1}^{\rho}p_i\right)=\delta \lambda_q(M_j).
\end{aligned}
\]
By noticing that $\delta>0$ and $\lambda_q(M_j)>0$, we have
\[\Phi_q(M_j)-\operatorname{tr}\left(V^{\top}M_jV\right)\ge \lambda_q(M_j)\delta>0,\]
which contradicts $\operatorname{tr}\left(V^{\top}M_jV\right)=\Phi_q(M_j)$.
\end{proof}
By the definition of $p_i$ and Claim \ref{c-1}, for every $i>\rho$, we have $\sum_{\ell=1}^{q}\left(\mathbf{v}_{\ell}^{\top}\mathbf{x}_i\right)^2=0$. This implies that $\mathbf{v}_{\ell}^{\top}\mathbf{x}_i=0$ for all $1\le \ell\le  q$ and $\rho+1\le  i\le  m$.
Since $\ker M_j=\operatorname{span}\left\{\mathbf{x}_{\rho+1},\ldots,\mathbf{x}_m\right\}$, each $\mathbf{v}_{\ell}$ is orthogonal to $\ker M_j$. Therefore, we have \[\mathbf{v}_{\ell}\in(\ker M_j)^{\perp}=\operatorname{im}M_j \] for $1\le \ell\le  q$. Since $U=\operatorname{im}V=\operatorname{span}\left\{\mathbf{v}_1,\ldots,\mathbf{v}_q\right\}$,
we conclude that $U\subseteq\operatorname{im}M_j$. This proves (ii).
\end{proof}

We next state explicitly the local inequality used in the proof of the
face-degree majorization theorem. This formulation will make the equality
argument below self-contained.
\begin{lemma}\label{lem-local-majorization}
	Let $X$ be an $r$-dimensional simplicial complex with $r\ge 2$.
	For every $\eta\in X(r-2)$ and every positive integer $q$,
	\begin{equation}\label{eq-local-majorization}
		\Phi_q(A_\eta)
		\le
		\sum_{\substack{\tau\in X(r-1)\\ \eta\subset\tau}}
		\min\{\deg_{r-1}(X;\tau),q\}.
	\end{equation}
\end{lemma}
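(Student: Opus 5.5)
By Lemma~\ref{lem-1}, $A_\eta$ is orthogonally similar (via the signed permutation $P_\eta$) to $L_1^{\operatorname{down}}(H_\eta)\oplus O$. Hence $\Phi_q(A_\eta)=\Phi_q\bigl(L_1^{\operatorname{down}}(H_\eta)\bigr)$. By \eqref{eq-01}, the nonzero spectrum of $L_1^{\operatorname{down}}(H_\eta)$ coincides with that of $L_0^{\operatorname{up}}(H_\eta)$, which is the ordinary graph Laplacian of $H_\eta$. Since $\Phi_q$ only sees nonzero eigenvalues (the others contribute zero), we get
\[
\Phi_q(A_\eta)=\sum_{i=1}^{q}\lambda_{0,i}(H_\eta),
\]
so the problem is reduced to a statement about the Laplacian of the graph $H_\eta$.

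Apply the Grone--Merris--Bai theorem to $H_\eta$. It gives $\sum_{i=1}^q\lambda_{0,i}(H_\eta)\le\sum_{i=1}^q d^{\top}_{0,i}(H_\eta)$. This is also the case $r=1$ of the face-degree majorization theorem of Zhang, Song, and Fan, which the paper already cites.

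It then remains to compute the right-hand side. For any nonnegative integer sequence, the sum of the first $q$ conjugate parts is
\[
\sum_{j=1}^q d_j^{\top}=\sum_{j=1}^q\#\{v:\deg v\ge j\}=\sum_{v}\min\{\deg v,q\},
\]
by swapping the order of summation. The vertices $v$ of $H_\eta$ correspond bijectively to the faces $\tau=\eta\cup\{v\}\in X(r-1)$ containing $\eta$. Under this correspondence $\deg_{H_\eta}(v)=\deg_{r-1}(X;\tau)$, as recorded before Lemma~\ref{lem-1}. Vertices of the link that lie in no edge of $H_\eta$ have degree $0$ and contribute $0$, so including or omitting them does not change the sum. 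This yields \eqref{eq-local-majorization}.

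The main point needing care is the bookkeeping of zeros: $\Phi_q$ is defined with zero padding when $q$ exceeds the matrix order, and the direct sum with $O$ and the identification \eqref{eq-01} only change zero eigenvalues. Both facts are harmless for partial sums of nonnegative spectra. If one prefers not to invoke Bai's theorem as a black box, the fallback is to cite the local inequality exactly as it appears inside the Zhang--Song--Fan proof. However, the Grone--Merris--Bai route above is the plan.
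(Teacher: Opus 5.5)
Your proposal is correct and follows the paper's proof step for step. You reduce via Lemma~\ref{lem-1} and the equality of the nonzero spectra of $L_1^{\operatorname{down}}(H_\eta)$ and $L_0^{\operatorname{up}}(H_\eta)$, apply Grone--Merris--Bai to the link graph $H_\eta$, and then use the bijection $v\mapsto\eta\cup\{v\}$ with matching degrees; your explicit derivation of $\sum_{j\le q}d_j^{\top}=\sum_v\min\{\deg v,q\}$ and the zero-padding remarks just spell out details the paper leaves implicit.
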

\begin{proof}
	By Lemma~\ref{lem-1}, $A_\eta$ is orthogonally similar to
	$L_1^{\operatorname{down}}(H_\eta)\oplus O$. Moreover,
	$L_1^{\operatorname{down}}(H_\eta)$ and
	$L_0^{\operatorname{up}}(H_\eta)$ have the same nonzero eigenvalues,
	including multiplicities. Consequently,
	\[
	 \Phi_q(A_\eta)=\Phi_q\bigl(L_0^{\operatorname{up}}(H_\eta)\bigr).
	\]
	Applying the Grone--Merris--Bai theorem~\cite{GM1994,B2011} to the
	link graph $H_\eta$ gives
	\[
	 \Phi_q\bigl(L_0^{\operatorname{up}}(H_\eta)\bigr)
	 \le \sum_{v\in V(H_\eta)}\min\{\deg_{H_\eta}(v),q\}.
	\]
	The vertices $v$ of $H_\eta$ are in bijection with the
	$(r-1)$-simplices $\tau=\eta\cup\{v\}$ containing $\eta$, and
	\[
	 \deg_{H_\eta}(v)=\deg_{r-1}(X;\eta\cup\{v\}).
	\]
	Substitution yields \eqref{eq-local-majorization}.
\end{proof}

The proof of the face-degree majorization theorem of Zhang, Song, and Fan~\cite{ZSF2026} combines the global Ky Fan inequality with Lemma~\ref{lem-local-majorization}. Although the equality cases were not studied there, equality in the final inequality of the majorization chain forces equality in both the global Ky Fan step and every local majorization step. We record these equality consequences in the following proposition.
\begin{proposition}\label{prop-equality-chain}
	Let $X$ be an $r$-dimensional simplicial complex with $r\ge 2$, and let $q$ be a positive integer. Suppose that
	\begin{equation}\label{eq-partial-assumption}
		\sum_{i=1}^{q}\lambda_{r-1,i}(X)=\sum_{i=1}^{q}d_{r-1,i}^{\top}(X).
	\end{equation}
	Then
	\begin{equation}\label{eq-global-local-equality}
		\Phi_q\left(\sum_{\eta\in X(r-2)}A_\eta\right)=\sum_{\eta\in X(r-2)}\Phi_q(A_\eta),
	\end{equation}
	and, for every $\eta\in X(r-2)$,
	\begin{equation}\label{eq-every-local-equality}
		\Phi_q(A_\eta)=\sum_{\tau\in X(r-1)\colon\eta\subset\tau}\min\left\{\deg_{r-1}(X;\tau),q\right\}.
	\end{equation}
\end{proposition}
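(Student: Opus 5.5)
The plan is to reconstruct the majorization chain of Zhang, Song, and Fan as a sequence of inequalities whose two ends are the two sides of \eqref{eq-partial-assumption}. Once the chain is written down, equality at the ends forces equality at every intermediate step. We begin with the left-hand side. By \eqref{eq-01} applied with $k=r$, the nonzero eigenvalues of $L_{r-1}^{\operatorname{up}}(X)$ and $L_r^{\operatorname{down}}(X)$ coincide. Both matrices are positive semidefinite, so
\[
\sum_{i=1}^{q}\lambda_{r-1,i}(X)=\Phi_q\bigl(L_r^{\operatorname{down}}(X)\bigr).
\]
Lemma~\ref{lem-2} then gives
\[
r\,\Phi_q\bigl(L_r^{\operatorname{down}}(X)\bigr)=\Phi_q\Bigl(\sum_{\eta\in X(r-2)}A_\eta\Bigr).
\]
If $q\ge f_r(X)$, then $\Phi_q$ is just the trace, and \eqref{eq-global-local-equality} holds trivially because trace is additive. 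If $q<f_r(X)$, Lemma~\ref{lem-kyfan-inequality} bounds the right-hand side above by $\sum_\eta\Phi_q(A_\eta)$.

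Next we apply Lemma~\ref{lem-local-majorization} to each $\eta$. Summing over $\eta$ gives
\[
\sum_\eta\Phi_q(A_\eta)\le\sum_{\eta\in X(r-2)}\sum_{\tau\supset\eta}\min\{\deg_{r-1}(X;\tau),q\}.
\]
We exchange the order of summation. Each $\tau\in X(r-1)$ has exactly $r$ faces of dimension $r-2$, so the double sum equals
\[
r\sum_{\tau\in X(r-1)}\min\{\deg_{r-1}(X;\tau),q\}.
\]
Finally we use the standard identity for conjugate partitions,
\[
\sum_{\tau}\min\{\deg_{r-1}(X;\tau),q\}=\sum_{j=1}^{q}\bigl|\{\tau:\deg_{r-1}(X;\tau)\ge j\}\bigr|=\sum_{j=1}^{q}d^{\top}_{r-1,j}(X).
\]
To verify it, write $\min\{d,q\}=\sum_{j=1}^q\mathbf{1}[d\ge j]$ and swap the sums.

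Putting these together, we obtain
\[
r\sum_{i=1}^{q}\lambda_{r-1,i}=\Phi_q\Bigl(\sum_\eta A_\eta\Bigr)\le\sum_\eta\Phi_q(A_\eta)\le r\sum_{i=1}^{q}d^{\top}_{r-1,i}.
\]
By hypothesis \eqref{eq-partial-assumption}, the two ends are equal, so both inequalities are equalities. Equality in the first inequality is \eqref{eq-global-local-equality}. Equality in the second is an equality between sums of termwise inequalities, each term of which is valid by Lemma~\ref{lem-local-majorization}. Hence every term is an equality, which is \eqref{eq-every-local-equality}.

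The main obstacle is bookkeeping rather than any conceptual difficulty. The Ky Fan lemma is stated only for $q\le m$, where $m=f_r(X)$, so large $q$ has to be handled separately. This is exactly the trivial trace case treated above, since $\Phi_q$ pads with zeros. The double-counting factor $r$ must also be tracked carefully on both sides.
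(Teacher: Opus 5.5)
Your proposal is correct and follows the paper's own proof. You write the same Zhang--Song--Fan chain (multiplied through by $r$ rather than divided by $r$), and equality of its endpoints forces equality both in the Ky Fan step and in the sum of the nonnegative local defects. Your separate treatment of $q\ge f_r(X)$, where $\Phi_q$ reduces to the additive trace, adds a little care the paper omits, since its Ky Fan lemma is stated only for $q\le m$.
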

\begin{proof}
	The proof of the face-degree majorization theorem in
	Zhang, Song, and Fan~\cite{ZSF2026} is based on the following chain
	of inequalities:
	\[\begin{aligned}
		\sum_{i=1}^{q}\lambda_{r-1,i}(X)&=\frac{1}{r}\Phi_q\left(\sum_{\eta\in X(r-2)}A_\eta\right)\le \frac{1}{r}\sum_{\eta\in X(r-2)}\Phi_q(A_\eta)\\&\le  \frac{1}{r}\sum_{\eta\in X(r-2)}\sum_{\tau\in X(r-1)\colon \eta\subset\tau}\min\left\{\deg_{r-1}(X;\tau),q\right\}
		\\&=\sum_{\tau\in X(r-1)}\min\left\{\deg_{r-1}(X;\tau),q\right\}=\sum_{i=1}^{q}d_{r-1,i}^{\top}(X).
	\end{aligned}\]
Here, the first inequality is the Ky Fan inequality, and the second
	inequality follows from Lemma~\ref{lem-local-majorization}.
By \eqref{eq-partial-assumption}, the first and the last terms of the above chain are equal. Since all inequalities in the chain are in the same direction, every intermediate inequality must also be an equality. Equality in the first inequality gives
\[\frac{1}{r}\Phi_q\left(\sum_{\eta\in X(r-2)}A_\eta\right)=\frac{1}{r}\sum_{\eta\in X(r-2)}\Phi_q(A_\eta).\]
Multiplying both sides by $r$ yields
\eqref{eq-global-local-equality}.

	It remains to consider the equality in the local majorization
	inequalities. For every $\eta\in X(r-2)$, define
	\[\delta_\eta=\sum_{\tau\in X(r-1)\colon \eta\subset\tau}\min\left\{\deg_{r-1}(X;\tau),q\right\}-\Phi_q(A_\eta).\]
	By Lemma~\ref{lem-local-majorization}, we have $\delta_\eta\ge 0$
	for every $\eta\in X(r-2)$. Since equality holds in the second inequality of the above chain, we have $\sum_{\eta\in X(r-2)}\delta_\eta=0$, and thus $\delta_\eta=0$ for every $\eta\in X(r-2)$. Therefore, 	\eqref{eq-every-local-equality} holds.
\end{proof}
\section{The proofs of main results}\label{sc-4}

Before proceeding with the proofs, we introduce some notation and record a simple observation that will be used throughout this
section. We use $\mathbb R^{X(r)}$ to denote the $f_r(X)$-dimensional real vector space whose coordinates are indexed
by the $r$-simplices of $X$. For a subset $\mathcal F\subseteq X(r)$, let
\[\mathbb R^{\mathcal F}=\{\mathbf{x}\in\mathbb R^{X(r)}:\mathbf{x}_\sigma=0\text{ for all }\sigma\in X(r)\setminus\mathcal F\}.\]
For each $\sigma\in X(r)$, let $e_\sigma$ denote the standard basis vector of $\mathbb R^{X(r)}$ corresponding to the coordinate
$\sigma$. Notice that, for any family of subsets
$\{\mathcal F_i\}_{i\in I}$ of $X(r)$,
\begin{equation}\label{eq-coordinate-intersection}
	\bigcap_{i\in I}\mathbb R^{\mathcal F_i}
	=\mathbb R^{\cap_{i\in I}\mathcal F_i}.
\end{equation}
In particular, if a $q$-dimensional subspace is contained in every
$\mathbb R^{\mathcal F_i}$, then
$|\cap_{i\in I}\mathcal F_i|\ge q$. For every $\eta\in X(r-2)$, call $\eta$ \emph{active} if $\mathcal F_\eta\ne \emptyset$.  Since $A_\eta$ is supported on the coordinate set $\mathcal F_\eta$, we have
\begin{equation}\label{eq-15}
	\operatorname{im}A_\eta\subseteq\mathbb R^{\mathcal F_\eta}.
\end{equation}

We are now ready to prove Theorem~\ref{thm-main}.

\begin{proof}[Proof of Theorem~\ref{thm-main}]
Let $m=f_r(X)$. We first prove the sufficiency. Suppose that
\[
q\ge\max\{\operatorname{rank}B_r(X),\Delta_{r-1}(X)\}.
\]
Since $L_{r-1}^{\operatorname{up}}(X)=B_r(X)B_r(X)^\top$, it follows that
\[\operatorname{rank}L_{r-1}^{\operatorname{up}}(X)=\operatorname{rank}B_r(X)\le q\quad\text{and}\quad \Delta_{r-1}(X)\le q.\]
This implies $\lambda_{r-1,i}(X)=0$ for $i>q$. Therefore, we obtain
\[\sum_{i=1}^q\lambda_{r-1,i}(X)=\operatorname{tr}L_{r-1}^{\operatorname{up}}(X)=(r+1)f_r(X).\]
Moreover, since $\Delta_{r-1}(X)\le q$, we have $d_{r-1,i}^{\top}(X)=0$ for $i>q$. 
It follows that
\[\sum_{i=1}^qd_{r-1,i}^{\top}(X)=\sum_{\tau\in X(r-1)}\deg_{r-1}(X;\tau)=(r+1)f_r(X).\]
Consequently,
\[\sum_{i=1}^{q}\lambda_{r-1,i}(X)=\sum_{i=1}^{q}d_{r-1,i}^{\top}(X).\]

We now prove the necessity. Suppose that  \[\sum_{i=1}^{q}\lambda_{r-1,i}(X)=\sum_{i=1}^{q}d_{r-1,i}^{\top}(X).\]
If $q\ge  m$, then $\operatorname{rank}B_r(X)\le  m\le  q$ and $\Delta_{r-1}(X)\le  m\le  q$. Hence, the desired conclusion follows. Therefore, we may assume that $q<m$. By Proposition~\ref{prop-equality-chain}, we have\[	\Phi_q\left(\sum_{\eta\in X(r-2)}A_\eta\right)=\sum_{\eta\in X(r-2)}\Phi_q(A_\eta).\]
It follows from Lemma~\ref{lem-common-subspace} that there exists a $q$-dimensional subspace $U\subseteq \mathbb R^{m}$
which is a maximizing subspace for every $A_\eta$. We distinguish two cases according to the value of $q$.

\vspace{10pt}
\noindent\textbf{Case 1.} $q=1$. 
\vspace{10pt}

In this case, \eqref{eq-global-local-equality} reduces to \begin{equation}\label{eq-12}
	\lambda_1\left(\sum_{\eta\in X(r-2)}A_\eta\right)=\sum_{\eta\in X(r-2)}\lambda_1(A_\eta).
\end{equation}
Choose a unit eigenvector $\mathbf{x}\in\mathbb{R}^{f_r(X)}$ corresponding to $\lambda_1\left(\sum_{\eta\in X(r-2)}A_\eta\right)$, where the coordinates of $\mathbf{x}$ are indexed by the $r$-simplices of $X$. We first derive two consequences of this equality.
\begin{claim}\label{f-3}
	\rm	For every $\eta\in X(r-2)$, $\mathbf{x}^\top A_\eta \mathbf{x}=\lambda_1(A_\eta)$.
\end{claim}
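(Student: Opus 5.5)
We prove Claim~\ref{f-3} by the same squeezing argument used in the first part of Lemma~\ref{lem-common-subspace}, now specialized to $q=1$ and the fixed unit vector $\mathbf{x}$. The only tools needed are the Rayleigh quotient characterization of the largest eigenvalue, which is the case $q=1$ of Lemma~\ref{lem-ky-fan}, and the equality \eqref{eq-12}.

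First, since $\mathbf{x}$ is a unit eigenvector of $\sum_{\eta\in X(r-2)}A_\eta$ for its largest eigenvalue, we have
\[
\sum_{\eta\in X(r-2)}\mathbf{x}^\top A_\eta\mathbf{x}
=\mathbf{x}^\top\Bigl(\sum_{\eta\in X(r-2)}A_\eta\Bigr)\mathbf{x}
=\lambda_1\Bigl(\sum_{\eta\in X(r-2)}A_\eta\Bigr).
\]
By \eqref{eq-12}, the right-hand side equals $\sum_{\eta}\lambda_1(A_\eta)$. Hence
\[
\sum_{\eta\in X(r-2)}\bigl(\lambda_1(A_\eta)-\mathbf{x}^\top A_\eta\mathbf{x}\bigr)=0.
\]

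Second, Lemma~\ref{lem-ky-fan} with $q=1$ gives $\mathbf{x}^\top A_\eta\mathbf{x}\le \lambda_1(A_\eta)$ for every $\eta$, because $\mathbf{x}$ is a unit vector. Every summand above is therefore nonnegative. Since the summands add to zero, each one vanishes, which is exactly $\mathbf{x}^\top A_\eta\mathbf{x}=\lambda_1(A_\eta)$ for every $\eta\in X(r-2)$.

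There is no real obstacle here. The one point to watch is that $\mathbf{x}$ must be taken as an eigenvector of the \emph{sum} $\sum_\eta A_\eta$, not of any individual $A_\eta$. The equality \eqref{eq-12} is precisely what transfers optimality of $\mathbf{x}$ from the sum to each summand. Alternatively, the claim follows directly from Lemma~\ref{lem-common-subspace}, taking $U=\operatorname{span}\{\mathbf{x}\}$, since the proof of that lemma builds $U$ from an arbitrary maximizer of the sum. In the subsequent argument, one would combine the claim with the fact that, for a positive semidefinite $A_\eta$, equality in the Rayleigh quotient forces $A_\eta\mathbf{x}=\lambda_1(A_\eta)\mathbf{x}$. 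Together with \eqref{eq-15}, this forces either $\mathbf{x}\in\mathbb R^{\mathcal F_\eta}$ or $\mathbf{x}\perp\mathbb R^{\mathcal F_\eta}$, which drives the support analysis.
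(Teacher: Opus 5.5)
Your proposal is correct and is essentially the paper's proof: you use \eqref{eq-12} to write $\sum_\eta \mathbf{x}^\top A_\eta\mathbf{x}=\sum_\eta\lambda_1(A_\eta)$, bound each $\mathbf{x}^\top A_\eta\mathbf{x}$ above by $\lambda_1(A_\eta)$ via the Rayleigh quotient, and conclude termwise equality. The side remarks go beyond what the claim requires but do not affect the argument.
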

\begin{proof}
	By \eqref{eq-12}, we have
	\[\begin{aligned}
		\sum_{\eta\in X(r-2)}\lambda_1(A_\eta)=\mathbf{x}^\top\left(\sum_{\eta\in X(r-2)}A_\eta\right)\mathbf{x}&=\sum_{\eta\in X(r-2)}\mathbf{x}^\top A_\eta\mathbf{x}\le \sum_{\eta\in X(r-2)}\lambda_1(A_\eta).
	\end{aligned}\]
Since every summand on the left-hand side is bounded above by the corresponding summand on the right-hand side, equality of the sums
forces equality term by term. Hence, $\mathbf{x}^\top A_\eta \mathbf{x}=\lambda_1(A_\eta)$ for every $\eta\in X(r-2)$.
\end{proof}
\begin{claim}\label{f-4}
	\rm	For every active $\eta$, $\operatorname{supp}\left(\mathbf{x}\right)\subseteq\mathcal{F}_\eta$.
\end{claim}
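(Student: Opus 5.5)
The plan is a direct Rayleigh-quotient argument built on Claim~\ref{f-3}. It uses only two facts: $A_\eta$ is supported on the coordinates $\mathcal F_\eta$, and $\lambda_1(A_\eta)>0$ whenever $\eta$ is active.

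Fix an active $\eta\in X(r-2)$. Since $\mathcal F_\eta\neq\emptyset$, pick $\sigma\in\mathcal F_\eta$. By \eqref{eq-local-diagonal} we have $e_\sigma^\top A_\eta e_\sigma=(A_\eta)_{\sigma,\sigma}=2$. Lemma~\ref{lem-ky-fan} with $q=1$ then gives $\lambda_1(A_\eta)\ge 2>0$.

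Next, split the unit vector as $\mathbf{x}=\mathbf{y}+\mathbf{z}$. Here $\mathbf{y}\in\mathbb R^{\mathcal F_\eta}$ agrees with $\mathbf{x}$ on $\mathcal F_\eta$, and $\mathbf{z}\in\mathbb R^{X(r)\setminus\mathcal F_\eta}$ agrees with $\mathbf{x}$ off $\mathcal F_\eta$.

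Every entry $(A_\eta)_{\sigma,\sigma'}$ with $\sigma\notin\mathcal F_\eta$ or $\sigma'\notin\mathcal F_\eta$ vanishes. Indeed, $B_r(X;\eta)$ has nonzero entries only in columns $\sigma\supset\eta$, so $A_\eta=B_r(X;\eta)^\top B_r(X;\eta)$ has nonzero rows and columns only in $\mathcal F_\eta$; this is the same fact behind \eqref{eq-15}. Hence $A_\eta\mathbf{z}=0$ and $\mathbf{z}^\top A_\eta=0$, so
\[
\mathbf{x}^\top A_\eta\mathbf{x}=\mathbf{y}^\top A_\eta\mathbf{y}\le \lambda_1(A_\eta)\,\|\mathbf{y}\|^2 .
\]

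Combining this with Claim~\ref{f-3} gives $\lambda_1(A_\eta)\le\lambda_1(A_\eta)\|\mathbf{y}\|^2$. Since $\lambda_1(A_\eta)>0$, we get $\|\mathbf{y}\|^2\ge 1$. Because $\|\mathbf{y}\|^2+\|\mathbf{z}\|^2=\|\mathbf{x}\|^2=1$, this forces $\mathbf{z}=0$, that is, $\operatorname{supp}(\mathbf{x})\subseteq\mathcal F_\eta$.

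The only point needing care is the strict positivity of $\lambda_1(A_\eta)$, which is exactly where activity of $\eta$ is used. For an inactive $\eta$ we have $A_\eta=0$, and the conclusion would fail, since $\mathcal F_\eta=\emptyset$ while $\mathbf{x}\neq0$.

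An alternative route is Lemma~\ref{lem-common-subspace}(ii)/(i) together with \eqref{eq-15}. That lemma yields $U\subseteq\operatorname{im}A_\eta\subseteq\mathbb R^{\mathcal F_\eta}$ when $\operatorname{rank}A_\eta>1$. When $\operatorname{rank}A_\eta=1$ it yields $\operatorname{im}A_\eta\subseteq U$, so $U=\operatorname{im}A_\eta$ because both have dimension $1$, and again $U\subseteq\mathbb R^{\mathcal F_\eta}$. The direct argument above avoids this rank split, so I would use it.
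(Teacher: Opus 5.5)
Your argument is correct and uses the same ingredients as the paper's proof: $\lambda_1(A_\eta)>0$ for active $\eta$, Claim~\ref{f-3}, and the vanishing of all rows and columns of $A_\eta$ outside $\mathcal F_\eta$. The only difference is the last step: the paper uses the equality case of the Rayleigh quotient to get $A_\eta\mathbf{x}=\lambda_1(A_\eta)\mathbf{x}$ and reads off $\mathbf{x}_\sigma=0$ coordinatewise (getting $\lambda_1(A_\eta)>0$ from the trace and positive semidefiniteness), while your splitting $\mathbf{x}=\mathbf{y}+\mathbf{z}$ reaches the same conclusion from $\mathbf{x}^\top A_\eta\mathbf{x}\le\lambda_1(A_\eta)\|\mathbf{y}\|^2$ and so needs neither fact.
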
 
\begin{proof}
	Let $\eta$ be an active simplex. Then $\mathcal{F}_\eta\ne \emptyset$. By \eqref{eq-local-diagonal}, we have $\operatorname{tr}\left(A_\eta\right)>0$. Moreover, Lemma~\ref{lem-1} implies that $A_\eta$ is orthogonally
	similar to $L_1^{\operatorname{down}}(H_\eta)\oplus O$, and hence $A_\eta$ is positive semidefinite. Therefore, we have $\lambda_1(A_\eta)>0$. Since $A_\eta$ is symmetric, the equality case of the Rayleigh quotient together with Claim~\ref{f-3} gives $A_\eta \mathbf{x}=\lambda_1(A_\eta)\mathbf{x}$ for every active $\eta$. By the definition of $A_\eta$, all rows and columns outside the
	coordinate set $\mathcal{F}_\eta$ are zero. Thus, for every $\sigma\in X(r)\setminus\mathcal{F}_\eta$, we have
	\[0=(A_\eta \mathbf{x})_\sigma=\lambda_1(A_\eta)\mathbf{x}_\sigma.\]
	Since $\lambda_1(A_\eta)>0$, we obtain $\mathbf{x}_\sigma=0$, and thus $\sigma\notin \operatorname{supp}\left(\mathbf{x}\right)$. Therefore, $\operatorname{supp}\left(\mathbf{x}\right)\subseteq\mathcal{F}_\eta$.
\end{proof}

Choose $\sigma_0\in X(r)$ such that $\mathbf{x}_{\sigma_0}\neq 0$. By Claim \ref{f-4}, we have $\sigma_0\in \operatorname{supp}\left(\mathbf{x}\right)\subseteq\mathcal{F}_\eta$, and thus $\eta\subset \sigma_0$ for every active $\eta$. Let $\sigma\in X(r)$ be arbitrary. Then every $(r-2)$-simplex of $\sigma$ is active and hence is contained in $\sigma_0$. Since $r\ge 2$, every vertex of $\sigma$ belongs to an $(r-2)$-simplex of $\sigma$. Therefore, $\sigma\subseteq\sigma_0$. Since both $\sigma$ and $\sigma_0$ have cardinality $r+1$, it follows that $\sigma=\sigma_0$. As $\sigma$ was arbitrary, $\sigma_0$ is the unique $r$-simplex of $X$. Consequently,  $\operatorname{rank}B_r(X)=1$ and $\Delta_{r-1}(X)=1$, which proves the desired conclusion for $q=1$.

\vspace{10pt}
\noindent\textbf{Case 2.} $2\le  q<m$. 
\vspace{10pt}

The key step in this case is to show that every local matrix $A_\eta$ has rank at most $q$. We prove this in the following claim.
\begin{claim}\label{c-2}
For every $\eta\in X(r-2)$, $\operatorname{rank}A_\eta\le  q$.
\end{claim}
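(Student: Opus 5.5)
We argue by contradiction. Assume that in Case~2 some $\eta_0\in X(r-2)$ satisfies $\operatorname{rank}A_{\eta_0}>q$, and let $U$ be the common maximizing $q$-subspace supplied by Lemma~\ref{lem-common-subspace}. Split $X(r-2)$ into \emph{large} simplices $\mathcal L=\{\eta:\operatorname{rank}A_\eta>q\}$, which contains $\eta_0$, and \emph{small} simplices, those with $\operatorname{rank}A_\eta\le q$. Lemma~\ref{lem-common-subspace}(ii) together with \eqref{eq-15} gives $U\subseteq\mathbb R^{\mathcal F_\eta}$ for every $\eta\in\mathcal L$. By \eqref{eq-coordinate-intersection}, $U\subseteq\mathbb R^{\mathcal S}$, where
\[
\mathcal S=\bigcap_{\eta\in\mathcal L}\mathcal F_\eta\subseteq\mathcal F_{\eta_0},
\qquad |\mathcal S|\ge q\ge 2 .
\]
For small active $\eta$, Lemma~\ref{lem-common-subspace}(i) gives $\operatorname{im}A_\eta\subseteq U\subseteq\mathbb R^{\mathcal S}$. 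By \eqref{eq-local-diagonal}, $(A_\eta e_\sigma)_\sigma=2\neq0$ for each $\sigma\in\mathcal F_\eta$, so $\mathcal F_\eta\subseteq\mathcal S$. This yields the dichotomy that drives the proof: every active $(r-2)$-simplex $\eta$ satisfies either $\mathcal S\subseteq\mathcal F_\eta$ (if $\eta$ is large) or $\mathcal F_\eta\subseteq\mathcal S$ (if $\eta$ is small).

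Next we turn this into combinatorics. Every $\sigma\in\mathcal S$ has the form $\sigma=\eta_0\cup\{a,b\}$. Fix two distinct members $\sigma_1=\eta_0\cup\{a,b\}$ and $\sigma_2=\eta_0\cup\{c,d\}$ of $\mathcal S$. For $x\in\eta_0$ (nonempty since $r\ge2$), consider the $(r-2)$-faces $\eta'=(\eta_0\setminus\{x\})\cup\{a\}$ of $\sigma_1$, which do not contain $\eta_0$.
\begin{itemize}
\item If such an $\eta'$ is large, then $\sigma_2\supseteq\eta'$, which forces $a\in\sigma_2$.
\item If it is small, then $\mathcal F_{\eta'}\subseteq\mathcal S\subseteq\mathcal F_{\eta_0}$, so every $r$-simplex containing $\eta'$ also contains $\eta_0$, hence contains $\eta_0\cup\{a\}$.
\end{itemize}
Running over all $x$ and over $a,b$ and $c,d$, we expect to show one of two things: either $\mathcal S$ is a set of simplices all containing a common $(r-1)$-face $\tau\supset\eta_0$, or $\mathcal F_{\eta_0}$ is "closed" in a way incompatible with $\operatorname{rank}A_{\eta_0}>q$.

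The final contradiction should come from $\eta_0$ itself. By Lemma~\ref{lem-1}, $A_{\eta_0}$ is $L_1^{\mathrm{down}}(H_{\eta_0})\oplus O$, and $U\subseteq\operatorname{im}A_{\eta_0}$ is spanned by top-$q$ eigenvectors of this matrix, which is the edge Laplacian of the link graph. In the star situation, $\mathbb R^{\mathcal S}$ corresponds to the edges of $H_{\eta_0}$ at a single vertex. We plan to check that a maximizing $q$-subspace of $L_1^{\mathrm{down}}(H_{\eta_0})$ supported on a star of $H_{\eta_0}$ forces that star to be a connected component, and that each other component is then spectrally dominated. This leaves the top-$q$ spectrum inside the star only when $\operatorname{rank}A_{\eta_0}\le q$. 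We will also use the local equality \eqref{eq-every-local-equality}, i.e.\ Grone--Merris--Bai equality on $H_{\eta_0}$, to control the remaining components.

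The main obstacle is this combinatorial rigidity step: ruling out all configurations of $\mathcal S$ via the large/small dichotomy. Our first attempt is the vertex-swapping argument above applied to faces of $\sigma_1$ and $\sigma_2$ adjacent to $\eta_0$, aiming to show that $\mathcal S$ lies in the star of a single $(r-1)$-face. It is exactly here that $r\ge2$ is used.
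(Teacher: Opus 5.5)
Your opening is correct and coincides with the paper's: the large/small split, $U\subseteq\mathbb R^{\mathcal S}$ with $|\mathcal S|\ge q\ge 2$, and $\mathcal F_\zeta\subseteq\mathcal S$ for every small active $\zeta$. After that, however, the proposal only announces the decisive step (``we expect to show'', ``we plan to check''), and the endgame you outline cannot work. You want the contradiction to come from $A_{\eta_0}$ alone. Your claim is that a maximizing $q$-subspace supported on a star of $H_{\eta_0}$ exists only when $\operatorname{rank}A_{\eta_0}\le q$. This fails in exactly the configuration that has to be excluded. Suppose every $r$-simplex is $C\cup\{v\}$ for a fixed $(r-1)$-face $C\supset\eta_0$. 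Then $H_{\eta_0}\cong K_{1,m}$ is itself the star, and $L_1^{\mathrm{down}}(K_{1,m})$ has eigenvalues $m+1,1,\dots,1$ and rank $m>q$. Any top-$q$ eigenspace lies in the star, and the local Grone--Merris--Bai equality at $\eta_0$ also holds, since stars are threshold graphs. Nothing local to $\eta_0$ is violated, so the contradiction has to come from the \emph{small} faces. Your vertex-swapping bullets are true but do not reach this.

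You are missing two ideas.

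\textbf{(a) Counting.} This is where $r\ge 2$ really enters. Distinct $\sigma_1,\sigma_2\in\mathcal S$ contain every large face, so $C:=\bigcup_{\eta\in\mathcal L}\eta\subseteq\sigma_1\cap\sigma_2$. Hence $|C|\le r$ and $|\mathcal L|\le\binom{|C|}{r-1}\le r<\binom{r+1}{2}$. So every $r$-simplex $\sigma$ has a small $(r-2)$-face $\zeta$, and your own dichotomy gives $\sigma\in\mathcal F_\zeta\subseteq\mathcal S$. Therefore $\mathcal S=X(r)$: every $r$-simplex contains $C$, and $|C|\in\{r-1,r\}$.

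\textbf{(b) Covering.} Exhibit small faces $\zeta$ with $\operatorname{im}A_\zeta=\mathbb R^{\mathcal F_\zeta}$ whose sets $\mathcal F_\zeta$ cover $X(r)$.
If $|C|=r$, take $\zeta=(C\setminus\{c,d\})\cup\{v\}$. It is small because $\zeta\not\subseteq C$, and $\mathcal F_\zeta=\{C\cup\{v\}\}$.
If $|C|=r-1$, so that $\mathcal L=\{C\}$, take $\zeta=(C\setminus\{c\})\cup\{w\}$ for a non-isolated vertex $w$ of $H_C$. Its link is a star centered at $c$, so $\operatorname{rank}A_\zeta=|\mathcal F_\zeta|$.
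Since $\operatorname{im}A_\zeta\subseteq U$ for small $\zeta$, either case forces $U=\mathbb R^{X(r)}$, contradicting $q<m$.
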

\begin{proof}
Suppose, to the contrary, that \[\mathcal S=\{\eta\in X(r-2):\operatorname{rank}A_\eta>q\}\ne \emptyset.\]
	For every $\eta\in\mathcal S$, Lemma~\ref{lem-common-subspace}(ii)
	and \eqref{eq-15} give
	\[
	 U\subseteq\operatorname{im}A_\eta
	 \subseteq\mathbb R^{\mathcal F_\eta}.
	\]
	Hence, by \eqref{eq-coordinate-intersection},
	\[
	 U\subseteq
	 \mathbb R^{\cap_{\eta\in\mathcal S}\mathcal F_\eta}.
	\]
	Since $\dim U=q\ge2$, it follows that
	\begin{equation}\label{eq-common-facets}
	 \left|\bigcap_{\eta\in\mathcal S}\mathcal F_\eta\right|
	 \ge q\ge2.
	\end{equation}
	Let
	\[
	 C=\bigcup_{\eta\in\mathcal S}\eta.
	\]
	Every $r$-simplex in
	$\cap_{\eta\in\mathcal S}\mathcal F_\eta$ contains $C$.
	By \eqref{eq-common-facets}, choose distinct
	$\sigma_1,\sigma_2\in\cap_{\eta\in\mathcal S}\mathcal F_\eta$.
	Then
	\[
	 C\subseteq\sigma_1\cap\sigma_2.
	\]
	Two distinct $r$-simplices have intersection of cardinality at most
	$r$, and therefore $|C|\le r$. On the other hand, every element of
	$\mathcal S$ has cardinality $r-1$, so $|C|\ge r-1$. Consequently,
	\begin{equation}\label{eq-C-two-sizes}
	 |C|\in\{r-1,r\}.
	\end{equation}
	
By the definition of $C$, we have $\mathcal S\subseteq\binom{C}{r-1}$, and thus $|\mathcal S|\le \binom{|C|}{r-1}\le  r$. Since $\dim X=r$, there exists at least one $r$-simplex in $X$. Therefore, we have $f_r(X)\ge 1$. This implies \[f_{r-2}(X)\ge \binom{r+1}{r-1}>r\ge |\mathcal S|.\] It leads to $X(r-2)\setminus\mathcal S\ne\emptyset$.
Now take $\zeta\in X(r-2)\setminus\mathcal S$. Then $\operatorname{rank}A_\zeta\le  q$.
By Lemma~\ref{lem-common-subspace} again, we have $\operatorname{im}A_\zeta\subseteq U$. Define $\mathcal F_C=
\{\sigma\in X(r):C\subseteq\sigma\}$. Since every $r$-simplex in $\bigcap_{\eta\in\mathcal S}\mathcal F_\eta$ contains $C$, we have $\bigcap_{\eta\in\mathcal S}\mathcal F_\eta\subseteq\mathcal F_C$. It follows that \begin{equation}\label{eq-14}
	\operatorname{im}A_\zeta\subseteq U\subseteq\mathbb R^{\cap_{\eta\in\mathcal S}\mathcal F_\eta}\subseteq\mathbb R^{\mathcal F_C}.
\end{equation}
Let $\sigma\in\mathcal F_\zeta$. From \eqref{eq-14}, we have $A_\zeta e_\sigma\in\mathbb R^{\mathcal F_C}$. By noticing that $\left(A_\zeta e_\sigma\right)_{\sigma}=(A_\zeta)_{\sigma,\sigma}=2\ne 0$ from \eqref{eq-local-diagonal}, we have $\sigma\in \mathcal F_C$. 
Therefore, we obtain the following fact:
\begin{fact}\label{f-5}
	\rm	For every $\zeta\notin\mathcal S$, $\mathcal F_\zeta\subseteq\mathcal F_C$.
\end{fact} 
Let $\sigma\in X(r)$. Then $\sigma$ contains $\binom{r+1}{2}$ $(r-2)$-simplices. Since $|\mathcal S|\le r$ and $\binom{r+1}{2}>r$ for $r\ge 2$, there exists $\zeta\subset\sigma$ with $\zeta\notin\mathcal S$. By Fact \ref{f-5}, we have $\sigma\in\mathcal F_\zeta\subseteq\mathcal F_C$.
By the definition of $\mathcal F_C$, the following fact holds.
\begin{fact}\label{f-6}
	\rm	For every $\sigma\in X(r)$, $C\subseteq\sigma$.
\end{fact} 

By \eqref{eq-C-two-sizes}, we distinguish two possibilities.

\vspace{10pt}

\noindent {(i) $|C|=r-1$.}

\vspace{10pt}

In this case, Fact~\ref{f-6} implies that $C$ is an $(r-2)$-simplex, and every $r$-simplex is therefore of the form $C\cup\{u,v\}$. Thus, every $r$-simplex of $X$ corresponds to the edges of the link graph $H_C$. Since $C=\bigcup_{\eta\in\mathcal S}\eta$ and $|\eta|=r-1=|C|$ for every $\eta\in \mathcal S$, we have $\mathcal S=\{C\}$. Let $w$ be a non-isolated vertex of $H_C$ and choose $c\in C$.
Let 
\[\zeta=(C\setminus\{c\})\cup\{w\}.\]
Then $\zeta \in X(r-2)\setminus\mathcal S$. By Fact~\ref{f-6}, every edge of $H_\zeta$ contains the vertex $c$.
Hence, the non-isolated part of $H_\zeta$ is a star centered at $c$. By Lemma~\ref{lem-1}, $A_\zeta$ is orthogonally similar to
the down-Laplacian of a star graph. Since the incidence matrix of a forest has full column rank, we obtain $\operatorname{rank}A_\zeta=|\mathcal F_\zeta|$. Since $\operatorname{im}A_\zeta\subseteq\mathbb R^{\mathcal F_\zeta}$ from \eqref{eq-15}, we have $\operatorname{im}A_\zeta=\mathbb R^{\mathcal F_\zeta}$. Combining this with \eqref{eq-14}, we obtain $\mathbb R^{\mathcal F_\zeta}=\operatorname{im}A_\zeta\subseteq U$.
As $w$ ranges over all non-isolated vertices of $H_C$, the sets $\mathcal F_\zeta$ cover $X(r)$. Therefore, we have $\mathbb R^{X(r)}\subseteq U$. This implies  $m=\dim\mathbb R^{X(r)}\le  \dim U=q$, which contradicts $q<m$.

\vspace{15pt}

\noindent{(ii) $|C|=r$.}

In this case, Fact~\ref{f-6} implies that $C$ is an $(r-1)$-simplex, and every $r$-simplex is therefore of the form
$C\cup\{v\}$ for some $v\notin C$. Let $W=\{v:C\cup\{v\}\in X(r)\}$. Then $|W|=m$. For each $c\in C$, write $\eta_c=C\setminus\{c\}$. The link graph $H_{\eta_c}$ is a star graph with center $c$ and leaf set $W$, and hence $H_{\eta_c}\cong K_{1,m}$. Since the incidence matrix of a tree has full column rank, Lemma~\ref{lem-1} implies that $\operatorname{rank}A_{\eta_c}=m>q$. Therefore, $\eta_c\in\mathcal S$. Fix $v\in W$. Choose distinct $c,d\in C$ and let
\[\zeta=(C\setminus\{c,d\})\cup\{v\}.
\]
Then the only $r$-simplex containing $\zeta$ is $C\cup\{v\}$, and hence $\mathcal F_\zeta=\{C\cup\{v\}\}$.  By \eqref{eq-15}, we have $\operatorname{im}A_\zeta\subseteq\mathbb R^{\mathcal F_\zeta}=\operatorname{span}\{e_{C\cup\{v\}}\}$. Moreover, $\mathcal F_\zeta\neq\emptyset$, and therefore $A_\zeta\neq0$. Hence $\operatorname{rank}A_\zeta\ge 1$, which yields
$\operatorname{im}A_\zeta=\operatorname{span}\{e_{C\cup\{v\}}\}$. Since $v\notin C$, we have $\zeta\nsubseteq C$. Every element of
$\mathcal S$ is contained in $C$, and therefore $\zeta\notin\mathcal S$. Combining this with \eqref{eq-14}, we have $e_{C\cup\{v\}}\in \operatorname{im}A_\zeta\subseteq U$. As $v$ is arbitrary, the simplices $C\cup\{v\}$ exhaust $X(r)$.
Consequently, $\mathbb R^{X(r)}\subseteq U$, which implies $m=\dim\mathbb R^{X(r)}\le  \dim U=q$,
contradicting the assumption $q<m$.

Both possibilities lead to contradictions. Hence $\mathcal S=\emptyset$, which proves the claim.
\end{proof}
For every $\eta\in X(r-2)$, it follows from Claim \ref{c-2} that $\operatorname{rank}A_\eta\le  q$. Hence, by part (i) of Lemma~\ref{lem-common-subspace}, we have $\operatorname{im}A_\eta\subseteq U$. Therefore, $\operatorname{im}\left(\sum_{\eta\in X(r-2)}A_\eta\right)\subseteq U$, and thus
\[\operatorname{rank}\left(\sum_{\eta\in X(r-2)}A_\eta\right)=\dim\operatorname{im}\left(\sum_{\eta\in X(r-2)}A_\eta\right)\le \dim U= q.\]
By Lemma~\ref{lem-2}, we have
	\[\operatorname{rank}L_r^{\operatorname{down}}(X)=\operatorname{rank}\left(\sum_{\eta\in X(r-2)}A_\eta\right)\le q.\]
Since $L_r^{\operatorname{down}}(X)=B_r(X)^{\top}B_r(X)$, we obtain \[\operatorname{rank}B_r(X)=\operatorname{rank}L_r^{\operatorname{down}}(X)\le q.\]
It remains to prove $\Delta_{r-1}(X)\le q$. For every $\eta\in X(r-2)$, since $\operatorname{rank}A_\eta\le  q$ and $A_\eta$ is positive
semidefinite, we have $\operatorname{tr}A_\eta=\Phi_q(A_\eta)$. Combining this with Proposition~\ref{prop-equality-chain}, we obtain \[
\operatorname{tr}A_\eta=\sum_{\tau\in X(r-1)\colon \eta\subset\tau}\min\left\{\deg_{r-1}(X;\tau),q\right\}.
\]
On the other hand, by \eqref{eq-local-diagonal} and double counting, we have
$\operatorname{tr}A_\eta=2|\mathcal F_\eta|=\sum_{\eta\subset\tau}\deg_{r-1}(X;\tau)$. Therefore,
\[\sum_{\eta\subset\tau}\left(\deg_{r-1}(X;\tau)-\min\{\deg_{r-1}(X;\tau),q\}\right)=0.\]
Since every summand is nonnegative, we conclude that \[\deg_{r-1}(X;\tau)=\min\{\deg_{r-1}(X;\tau),q\}\]  for every $\tau$ satisfying $\eta\subset\tau$. Hence, $\deg_{r-1}(X;\tau)\le q$ for every such $\tau$. Finally, every $(r-1)$-simplex of $X$ contains an $(r-2)$-simplex.
Since $\eta\in X(r-2)$ was arbitrary, the above inequality applies to every $\tau\in X(r-1)$. Therefore,  $\Delta_{r-1}(X)\le q$.

This proves the necessity, and hence the theorem follows.
\end{proof}
	
We now derive the corollaries stated in Section~\ref{sc-1}. We begin with the equality case for the largest eigenvalue.
\begin{proof}[Proof of Corollary~\ref{cor-first}]
By Theorem~\ref{thm-main} with $q=1$, we have $\lambda_{r-1,1}(X)=d_{r-1,1}^{\top}(X)$ if and only if $\operatorname{rank}B_r(X)\le 1$ and
$\Delta_{r-1}(X)\le 1$. Therefore, it remains to prove that $\operatorname{rank}B_r(X)\le 1$ and
$\Delta_{r-1}(X)\le 1$ hold if and only if $f_r(X)=1$.
	
Suppose that $\operatorname{rank}B_r(X)\le 1$ and $\Delta_{r-1}(X)\le 1$. Since $X$ is $r$-dimensional, we have $f_r(X)\ge 1$.
	If $f_r(X)\ge 2$, then either two distinct $r$-simplices share an $(r-1)$-face, in which case $	\Delta_{r-1}(X)\ge 2$, or they have no common $(r-1)$-face. In the latter case, the corresponding columns of $B_r(X)$ have disjoint supports and are therefore linearly independent, which implies $\operatorname{rank}B_r(X)\ge 2$. Both cases contradict the assumptions. Hence, $f_r(X)=1$.
	
Conversely, assume that $f_r(X)=1$. Then $B_r(X)$ consists of a single nonzero column, and hence $\operatorname{rank}B_r(X)=1$. Moreover, every $(r-1)$-simplex is contained in at most one $r$-simplex. Thus, $\Delta_{r-1}(X)=1$. 
	
	This completes the proof.
\end{proof}
	
We next extend the above characterization from the largest eigenvalue to the entire sequences.
\begin{proof}[Proof of Corollary~\ref{cor-full}]
	Suppose first that $\lambda_{r-1}(X)=d_{r-1}^{\top}(X)$. Then, by considering the first entries of the two sequences, we obtain $\lambda_{r-1,1}(X)=d_{r-1,1}^{\top}(X)$. By Corollary~\ref{cor-first}, we obtain $f_r(X)=1$.

	Conversely, suppose that $f_r(X)=1$. Let $X(r)=\{\sigma\}$. By Theorem \ref{thm-13}, we have \[L_{r}^{\operatorname{down}}(X)=\begin{pmatrix}r+1\end{pmatrix}.\] It follows from \eqref{eq-01} that $	\lambda_{r-1}(X)=(r+1,0,\ldots,0)$.
	On the other hand, the only $(r-1)$-simplices of positive degree are $r+1$ faces of $\sigma$, each having degree one. Hence, we have
$d_{r-1}^{\top}(X)=(r+1,0,\ldots,0)$. Therefore, $\lambda_{r-1}(X)=d_{r-1}^{\top}(X)$.

For a non-pure complex, additional maximal faces of dimension smaller than
$r$ may still be present; they contribute only zero entries to the two
sequences considered here. In particular, if $X$ is pure, then
$f_r(X)=1$ means that $X$ has a unique facet, which is an $r$-simplex.
Hence, $X\cong\Delta_r$.
The converse is immediate.
\end{proof}

Finally, we derive the homological reformulation of Theorem~\ref{thm-main}.
\begin{proof}[Proof of Corollary~\ref{cor-homology}]
Since $X$ has dimension $r$, we have $C_{r+1}(X)=0$, and hence \[\widetilde{H}_r(X;\mathbb{R})=\ker\partial_r(X).\]
By the rank-nullity theorem,
\[\operatorname{rank}B_r(X)=\operatorname{rank}\partial_r(X)=f_r(X)-\dim\ker\partial_r(X)=f_r(X)-\beta_r(X).
\]
The result now follows immediately from Theorem~\ref{thm-main}.
\end{proof}

The following remark highlights a fundamental difference between the higher-dimensional case and the graph case.
	\begin{remark}\label{rem-no-intermediate}
		For $r\ge 2$, Theorem~\ref{thm-main} implies that
		\[\sum_{i=1}^{q}\lambda_{r-1,i}(X)=\sum_{i=1}^{q}d_{r-1,i}^{\top}(X)\]
		if and only if $\lambda_{r-1,q+1}(X)=0$ and $d_{r-1,q+1}^{\top}(X)=0$.
		Thus, there is no genuine intermediate partial-sum equality:
		whenever equality occurs, both sequences have already exhausted
		all their nonzero terms by the $q$-th position.
		
		This is sharply different from the graph case, where nontrivial
		intermediate equality cases occur and were completely
		characterized by Cai, Chen, Yang, and Zhang~\cite{CCYZ2026}.
	\end{remark}

	\section*{Declaration of competing interest}
	
	The authors declare that they have no competing interests.
	
	\section*{Acknowledgements}
	
	This work is supported by the National Natural Science Foundation of
	China (No.~12371362).
	
%	\clearpage
%	\addcontentsline{toc}{section}{References}


{\small
	\begin{thebibliography}{99}
	\bibitem{B2011} H. Bai, The Grone--Merris conjecture, Trans. Amer. Math. Soc. 363 (2011), 4463--4474.
	
	\bibitem{CCYZ2026} D. Cai, Z. Chen, J. Yang, X. D. Zhang, The equality cases for the Grone--Merris--Bai theorem, arXiv:2607.23583, 2026.
	
	%\bibitem{DW2002} X. Dong, M. L. Wachs, Combinatorial Laplacian of the matching complex, Electron. J. Combin. 9 (2002), \#R17.
	
	\bibitem{DR2002} A. M. Duval, V. Reiner, Shifted simplicial complexes are Laplacian integral, Trans. Amer. Math. Soc. 354 (2002), 4313--4344.
	
	%\bibitem{F1996} J. Friedman, Computing Betti numbers via combinatorial Laplacians, in: Proceedings of the Twenty-Eighth Annual ACM Symposium on Theory of Computing, 1996, pp. 386--391.
	\bibitem{F2015} S. Friedland, Equality in Wielandt's eigenvalue inequality, Special Matrices 3 (2015), 53--57.
	\bibitem{FWW2024} Y. Z. Fan, H. F. Wu, Y. Wang, The largest Laplacian eigenvalue and the balancedness of simplicial complexes, J. Algebraic Combin. 61 (2025), Art. 53.
	\bibitem{GM1994} R. Grone, R. Merris, The Laplacian spectrum of a graph II, SIAM J. Discrete Math. 7 (1994), 221--229.

	\bibitem{G2026} V. Gupta, Spectral bounds and shifted complexes: eigenvalues of the up-Laplacian via face degrees, arXiv:2608.01694, 2026.
	
	\bibitem{HJ2012} R. A. Horn, C. R. Johnson, Matrix Analysis, 2nd ed., Cambridge University Press, Cambridge, 2012.
	
	%\bibitem{HLW2026} Y. Han, L. Lu, J. Wang, On the sum of the two largest eigenvalues of the curl-curl operator on graphs, arXiv:2606.26512, 2026.
	
	\bibitem{HJ2013} D. Horak, J. Jost, Spectra of combinatorial Laplace operators on simplicial complexes, Adv. Math. 244 (2013), 303--336.
	
	\bibitem{H2026} J. Huang, The Duval--Reiner conjecture: counterexamples and the second partial-sum inequality, arXiv:2607.20051, 2026.
	
	\bibitem{KRS2000} W. Kook, V. Reiner, D. Stanton, Combinatorial Laplacians of matroid complexes, J. Amer. Math. Soc. 13 (2000), 129--148.
	
	\bibitem{L2020} A. Lew, Spectral gaps, missing faces and minimal degrees, J. Combin. Theory Ser. A 169 (2020), 105127.
	
	\bibitem{M1994} R. Merris, Degree maximal graphs are Laplacian integral, Linear Algebra Appl. 199 (1994), 381--389.
	\bibitem{MOA2011} A. W. Marshall, I. Olkin, B. C. Arnold, Inequalities: Theory of Majorization and Its Applications, Springer Series in Statistics, Springer, New York, 2nd ed., 2011.
	\bibitem{SY2020} S. Shukla, D. Yogeshwaran, Spectral gap bounds for the simplicial Laplacian and an application to random complexes,
	J. Combin. Theory Ser. A 169 (2020), 105134.
	
	\bibitem{ZHL2026} X. F. Zhan, X. Y. Huang, H. Q. Lin, Proof of Lew's conjecture on the spectral gaps of simplicial complexes,
	J. Combin. Theory Ser. A 217 (2026), 106091.
	
	\bibitem{ZSF2026} H. Z. Zhang, Y. M. Song, Y. Z. Fan, Degree majorization and Laplacian eigenvalue sums for simplicial complexes,
	arXiv:2607.20910, 2026.
	
\end{thebibliography}
	}
\end{document}